\documentclass[10pt,reqno]{amsart}   
\usepackage{amssymb,amscd,latexsym}   
\usepackage{amsmath}
\usepackage{amsthm,times}
\usepackage[all]{xy}
\usepackage{epsfig,graphicx}
\usepackage{graphicx,color}
\usepackage[utf8]{inputenc}
\usepackage{float}
\usepackage{tikz}
\usepackage{enumitem}

\usepackage{tikz-cd}

\newcommand{\llar}{-\kern-5pt-\kern-5pt\longrightarrow}

\usepackage{multirow}
\usepackage{makecell} 

\newtheorem{Theorem}{Theorem} 
\newtheorem{Lemma}[Theorem]{Lemma}

\newtheorem{prop}[Theorem]{Proposition}
\newtheorem{Remark}[Theorem]{Remark}

\newtheorem{defi}[Theorem]{Definition}





\DeclareMathOperator{\Hom}{Hom}
\DeclareMathOperator{\id}{id}

\DeclareMathOperator{\Image}{Im}
\DeclareMathOperator{\coker}{coker}

\DeclareMathOperator{\gl}{GL}

\DeclareMathOperator{\rank}{rank}

\DeclareMathOperator{\End}{End}

\DeclareMathOperator{\E}{\rm E}

\newcommand{\op}[1]{{\mathcal O}_{\mathbb{P}^{#1}}}

\DeclareMathOperator{\rk}{{rk}}
\DeclareMathOperator{\im}{im}

\def\EE{{\mathcal{E}}}


\def\ker{{\rm ker}\,}

\def\rk{{\rm rank}\,}

\def\O{{\cal O}}
\def\restr{{\kern-1pt\restriction\kern-1pt}}

\def\X{{\mathcal X}}
\def\O{{\mathcal O}}
\def\E{{\mathcal E}}



\keywords{}

\subjclass[2010]{}

\begin{document}

\title[Monads and moduli for rank 2 bundles with odd determinant]{Monads and moduli components for stable rank 2 bundles with odd determinant on the projective space }

\author{Aislan Leal Fontes}
\address{Departamento de Matem\'atica, UFS - Campus Itabaiana. Av. Vereador Ol\'impio Grande s/n, 49506-036 Itabaiana/SE, Brazil}
\email{aislan@ufs.br}

\author{Marcos Jardim}
\address{IMECC - UNICAMP \\ Departamento de Matem\'atica \\
Rua S\'ergio Buarque de Holanda, 651\\ 13083-970 Campinas-SP, Brazil}
\email{jardim@unicamp.br}


\maketitle

\begin{abstract}
We propose a three-step program for the classification of stable rank 2 bundles on the projective space $\mathbb{P}^3$ inspired by an article by Hartshorne and Rao. While this classification program has been successfully completed for stable rank 2 bundles with even determinant and $c_2\le5$, much less is known for bundles with odd determinant. After revising the known facts about these objects, we list all possible spectra and minimal monads for stable rank 2 bundles with odd determinant and $c_2\le8$. We provide a full classification of all bundles with positive minimal monads, provide a negative answer to a question raised by Hartshorne and Rao, and describe new irreducible components of the moduli spaces of stable rank 2 bundles with odd determinant and $c_2=6,8$.
\end{abstract}

\tableofcontents

\section{Introduction}

The classification of rank 2 bundles on the projective space $\mathbb{P}^3$ is already a classical problem in algebraic geometry, with first results dating back to the 1960s. Over the past 50 years, three ways of classifying stable rank 2 bundles have emerged:
\begin{enumerate}
\item Barth and Elencwajg introduced in \cite{BE78} the notion of \textit{spectrum} for a rank 2 bundle, see Section \ref{sec:spec} below; thus one can list all possible spectra for rank 2 bundles.
\item Horrocks showed in \cite{Ho64} that every bundle on $\mathbb{P}^3$ is the cohomology of a \textit{monad} consisting only of line bundles, see further details in Section \ref{section1} below; so one can classify rank 2 bundles by classifying all possible monads.
\item the moduli space of stable rank 2 bundles with fixed determinant and second Chern class is potentially composed of several irreducible components, so one can classify rank 2 bundles up to flat deformations.
\end{enumerate}
To complicate matters, there is no straightforward relation between the three classifications: points in a given irreducible component of the moduli space may correspond to bundles with different spectra and monads; a single spectrum is associated to different monads; and the same monad may lead to different spectra. Therefore, a decent description of stable rank 2 bundles on $\mathbb{P}^3$ requires the consideration of all three classification schemes.

To be more precise, let $\mathcal{B}(e,m)$ denote the moduli space of stable rank 2 bundles on $\mathbb{P}^3$ with $c_1=e$ and $c_2=m$. Recall that it is enough to consider $e=0,-1$ (after normalization), and $m>0$ (by Bogomolov's inequality); in addition, $m$ must be even when $e=-1$. Let us revise some of the results available in the literature.

Spectra can be easily determined, and all spectra and possible minimal monads for stable rank 2 bundle with even determinant (the case $e=0$) with $c_2\le8$ were determined by Hartshorne and Rao in \cite[Table 5.3]{HR91}. Moreover, these authors also showed that every possible spectra for rank 2 stable bundles with even determinant is actually realized when $c_2\le19$. Regarding the classification up to flat deformation: 
\begin{itemize}
\item it is more or less clear from the table in \cite[Section 5.3]{HR91} that $\mathcal{B}(0,1)$ and $\mathcal{B}(0,2)$ are irreducible;
\item $\mathcal{B}(0,3)$ and $\mathcal{B}(0,4)$ have two irreducible components each, see \cite{ES} and \cite{C}, respectively; 
\item $\mathcal{B}(0,5)$ has three irreducible components, see \cite{AJTT}.
\end{itemize}

Stable rank 2 bundle with odd determinant (the case $e=-1$) has received considerable less attention:
\begin{itemize}
\item Hartshorne and Sols \cite{HS81} and Manolache \cite{M81} proved that $\mathcal{B}(-1,2)$ is irreducible;
\item Banica and Manolache proved in \cite{BM85} that $\mathcal{B}(-1,4)$ has two irreducible components;
\item Tikhomirov also studied certain families of stable rank 2 bundles with odd determinant, concluding that $\mathcal{B}(-1,6)$ has at least two irreducible components, see \cite[Theorem 3.3]{T14}.
\end{itemize}
As far as we know, no systematic study of the spectrum and monads for stable rank 2 bundles with odd determinant has been attempted so far. The goal of this paper is to fill in this gap and advance on the classification of stable rank 2 bundles of odd determinant with $c_2=6$ and $c_2=8$, while setting up a framework for future studies.

We organize this paper as follows. In Section \ref{section1} we recall the concept of minimal monads and its relation with stable rank 2 vector bundles on $\mathbb{P}^3$ and introduce the concepts of \textit{positive}, \textit{non-negative} and \textit{homotopy free} minimal monad. We take this opportunity to extend a well-known result due to Barth and Hulek \cite{BH78}, and show that every isomorphism of vector bundles on $\mathbb{P}^3$ lifts to an (possibly non-unique) isomorphism of monads (see Proposition \ref{prop5}).

In Section \ref{Serre}, we recall the Serre construction linking rank 2 bundles and space curves, and prove a crucial technical result, Lemma \ref{lema48}, adapting \cite[Lemma 4.8]{HR91} to the case of odd determinant. The following section is devoted to recall the definition of the spectrum for a stable rank 2 bundle with odd determinant, and we list all spectra for bundles with $c_2\leq8$, fulfilling the first (and easiest) step of the classification program.

In Section \ref{sec:monads} we adapt the arguments of \cite[Proposition 3.1]{HR91} to establish Theorem \ref{teo1}, which it is the main tool to find the possible minimal monads whose cohomology bundles have a given spectrum. The results of this section are summarized in Table \ref{table:c2=6}, Table \ref{table:c2=8} and Table \ref{table:5}: the first two tables contain all possible positive minimal monads for bundles with $c_2=6$ and $c_2=8$, while the third table lists all possible non-negative minimal monads for the same second Chern classes. We also show that negative monads are not realized by stable rank 2 bundles with odd determinant and $c_2\le8$, and, after eliminating certain cases, provide a complete classification of the realizable positive minimal monads in the same range, and partially fulfill the second step of the classification program. Finally, we also give a negative answer to a question of Hartshorne and Rao, namely \cite[(Q2), p. 806]{HR91}, showing that a certain spectrum cannot be realized by a stable rank 2 bundle, see Proposition \ref{prop4} for the details.

The third and hardest step of the classification program is considered in Section \ref{se:moduli}. We determine the existence of new irreducible components for $\mathcal{B}(-1,6)$ and $\mathcal{B}(-1,8)$, showing that each of the moduli spaces have at least four irreducible components, see Theorem \ref{m(-1,6)} and Theorem \ref{m(-1,8)} in Section \ref{se:moduli}. The completion of this last step requires a detailed study of non-negative monads and of monads with homotopy, which, rather unfortunately, have eluded the authors' best efforts so far.  

\subsection*{Acknowledgments}
MJ is supported by the CNPQ grant number 302889/2018-3 and the FAPESP Thematic Project 2018/21391-1. The authors also acknowledge the financial support from Coordenação de Aperfeiçoamento de Pessoal de Nível Superior - Brasil (CAPES) - Finance Code 001.


\section{Monads for stable rank 2 bundles with odd determinant} \label{section1}

Recall that a \textit{monad} is a complex of vector bundles of the form
\begin{equation}\label{eq2}
\mathbf{M}:\ \ \  \ \mathcal{C}\stackrel{\beta}{\longrightarrow}\mathcal{B}\stackrel{\alpha}{\longrightarrow}\mathcal{A},
\end{equation}
such that $\beta$ is injective and $\alpha$ is surjective. The sheaf 
$\mathcal{E}:=\ker \alpha/\Image\beta$ is the \textit{cohomology of the monad}. In this paper, we 
always assume that the morphism $\beta$ is locally left invertible, so that $\mathcal{E}$ is a vector 
bundle. In addition, a monad is called \textit{minimal} if no direct summand of $\mathcal{A}$ is the 
image of a line subbundle of $\mathcal{B}$ and if no direct summand of $\mathcal{C}$ goes 
onto a direct summand of $\mathcal{B}$. In addition, $\mathbf{M}$ is said to be
\textit{homotopy free} if
\begin{equation}\label{vanish'} \Hom(\mathcal{B},\mathcal{C})=\Hom(\mathcal{A},\mathcal{B})=0.
\end{equation}

Horrocks proved in \cite{Ho64} that every vector bundle on $\mathbb{P}^3$ (of arbitrary rank) is the cohomology of a minimal monad such that the bundles $\mathcal{C}$, $\mathcal{B}$ and $\mathcal{A}$ are sums of line bundles, see also \cite[Section 3]{BH78} and \cite[Corollary 2.4]{MR2010}. More precisely, every vector bundle $\mathcal{E}$ on $\mathbb{P}^3$ can be obtained as the cohomology of a minimal monad of the form
\begin{equation}\label{monad2}
\bigoplus_{i=1}^{r}\omega_{\mathbb{P}^3}(k_i) \stackrel{\beta}{\longrightarrow} \mathcal{B} \stackrel{\alpha}{\longrightarrow} \bigoplus_{j=1}^{s}\mathcal{O}_{\mathbb{P}^3}(l_j),
\end{equation}
where $\mathcal{B}$ also splits as a sum of line bundles; such a monad is called a \emph{minimal Horrocks monad} for $\mathcal{E}$. The integers $k_i$ are the degrees of a minimal set of generators of the module $H^1_*(\mathcal{E}^\vee\otimes\omega_{\mathbb{P}^3})$, while integers $-l_j$ are the degrees of a minimal set of generators of the module $H^1_*(\mathcal{E})$.

Next, assume that $\mathcal{E}$ and $\mathcal{E}'$ are cohomologies of minimal Horrocks monads $\mathbf{M}$ and $\mathbf{M}'$, respectively, of the form
$$ \mathbf{M} ~~\colon~~ \bigoplus_{i=1}^{r}\omega_{\mathbb{P}^3}(k_i) \stackrel{\beta}{\longrightarrow} \mathcal{B} \stackrel{\alpha}{\longrightarrow} \bigoplus_{j=1}^{s}\mathcal{O}_{\mathbb{P}^3}(l_j) ~~{\rm and}~~ $$
$$ \mathbf{M}' ~~\colon~~ \bigoplus_{i=1}^{r}\omega_{\mathbb{P}^3}(k_i') \stackrel{\beta'}{\longrightarrow} \mathcal{B'} \stackrel{\alpha'}{\longrightarrow} \bigoplus_{j=1}^{s}\mathcal{O}_{\mathbb{P}^3}(l_j') ~~. $$
Then \cite[Theorem 2.5]{MR2010} provides an exact sequence
\begin{equation}\label{sqc-homs}
\bigoplus_{i=1}^{r} H^0(\mathcal{B}^\vee\otimes\omega_{\mathbb{P}^3}(k_i)) \oplus \bigoplus_{j=1}^{s} H^0(\mathcal{B}'(-l_j)) \to
\Hom(\mathbf{M},\mathbf{M}') \to \Hom(\mathcal{E},\mathcal{E}') \to 0
\end{equation}
In particular, every $f\in\Hom(\mathcal{E},\mathcal{E}')$ lifts to a morphism between the corresponding monads; such a lift is unique precisely when the condition in display \eqref{vanish'}, that is:
\begin{equation}\label{vanish2}
H^0(\mathcal{B}^\vee\otimes\omega_{\mathbb{P}^3}(k_i))=H^0(\mathcal{B}'(-l_j))=0 ~~,~~ \forall ~~ i=1,\dots,r ~~{\rm and}~~ j=1,\dots,s.
\end{equation}
We observe that if the vanishing conditions in display \eqref{vanish2} are satisfied, then every \emph{isomorphism} $f:\mathcal{E}\to\mathcal{E}'$ between vector bundles lifts to an \emph{isomorphism} $\boldsymbol{\Psi}:\mathbf{M}\to\mathbf{M}'$ between the corresponding Horrocks monads, cf. \cite[remarks in pages 329 and 332]{BH78}.


Let us now assume that $\mathcal{E}$ is a rank $2$ bundle with $c_1(\mathcal{E})=-1$; this implies that $\mathcal{E}$ admits a \textit{twisted symplectic structure}, that is, there exist an isomorphism $f:\mathcal{E}\longrightarrow\mathcal{E}^\vee(-1)$ such that $f^*(-1)=-f$. Moreover, $f$ induces an isomorphism
$$ H^1(\mathcal{E}(-l)) \simeq H^1(\mathcal{E}^\vee(-l-1)) =   H^1(\mathcal{E}^\vee\otimes\omega_{\mathbb{P}^3}(3-l)) ; $$
this means that if $(a_1,\dots,a_s)$ are the degrees of a minimal set of generators for $H^1_*(\mathcal{E})$, then $(3-a_1,\dots,3-a_s)$ are the degrees of a minimal set of generators for $H^1_*(\mathcal{E}^\vee\otimes\omega_{\mathbb{P}^3})$.

It follows that a minimal Horrocks monad for a rank 2 bundle $\mathcal{E}$ with $c_1(\mathcal{E})=-1$ must be of the form
\begin{equation}\label{m00}
\bigoplus_{i=1}^{s}\op3(-a_i-1)\stackrel{\beta}{\longrightarrow}\mathcal{B} \stackrel{\alpha}{\longrightarrow}\bigoplus_{i=1}^{s}\op3(a_i),
\end{equation}
where $\mathcal{B}$ is a sum of $2s+2$ line bundles with $c_1(\mathcal{B})=s+1$. 
If $M:=H^1_*(\mathcal{E})$ has a minimal free presentation of the form
$$ \cdots\rightarrow F_1\rightarrow F_0\rightarrow M\rightarrow0 , $$
then $\rank(F_0)=s$ and, according to \cite[Proposition 2.2]{R84}, $\rank(F_1)=2s+2$.
Conversely, by repeating the arguments of Hartshorne and Rao in \cite[Proposition 3.2]{HR91}, we get that if $M$ admits a minimal free presentation as above, then $\mathcal{E}$ is given as cohomology of a minimal Horrocks monad as in display \eqref{m00} where $F_0=H^0_*(\mathcal{A})$ and $F_1=H^0_*(\mathcal{B})$; in other words, $\mathcal{A}$ and $\mathcal{B}$ are the sheafifications of the free modules $F_0$ and $F_1$, respectively.

In addition, note that $\mathcal{E}^\vee(-1)$ is the cohomology of the monad dual to the one in display \eqref{m00}, twisted by $\op3(-1)$, that is,
$$ \bigoplus_{i=1}^{s}\op3(-a_i-1)\stackrel{\alpha^*(-1)}{\longrightarrow}\mathcal{B}^\vee(-1) \stackrel{\beta^*(-1)}{\longrightarrow}\bigoplus_{i=1}^{s}\op3(a_i). $$
Since $\mathcal{E}$ and $\mathcal{E}^\vee(-1)$ are isomorphic, we get that $\mathcal{B}$ and $\mathcal{B}^\vee(-1)$ are isomorphic as well. Therefore, if $\op3(k)$ is a summand of $\mathcal{B}$, then $\op3(-k-1)$ must also be a summand of  $\mathcal{B}$, and the latter can be expressed in the following way
$$ \mathcal{B} = \bigoplus_{j=1}^{s+1} \Big( \op3(b_j)\oplus\op3(-b_j-1) \Big) ~~,~~ b_j\ge0. $$

We can summarize our conclusions so far in the following statement.

\begin{Lemma}\label{lema3}
Given a rank 2 bundle $\mathcal{E}$ on $\mathbb{P}^3$ with $c_1(\mathcal{E})=-1$, we can find two sequences of integers $\boldsymbol{a}=(a_1,\dots,a_s)$ and $\boldsymbol{b}=(b_1,\dots,b_{s+1})$ such that $\mathcal{E}$ is the cohomology of a monad of the form
\begin{equation}\label{eq6}
\bigoplus_{i=1}^{s}\op3(-a_i-1) \stackrel{\beta}{\longrightarrow} \bigoplus_{j=1}^{s+1}\Big(\op3(b_j)\oplus\op3(-b_j-1)\Big) \stackrel{\alpha}{\longrightarrow} \bigoplus_{i=1}^{s}\op3(a_i).
\end{equation}
In addition, 
\begin{equation}\label{eq1}
c_2(\mathcal{E})=\displaystyle\sum_{i=1}^{s}a_i(a_i+1)-\displaystyle\sum_{j=1}^{s+1}b_j(b_j+1).
\end{equation}
\end{Lemma}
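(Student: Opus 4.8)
The first assertion has in fact already been assembled in the discussion preceding the statement, so the plan is simply to collect those observations into the claimed form. I would start from a minimal Horrocks monad \eqref{monad2} guaranteed by Horrocks' theorem; the twisted symplectic structure $f:\mathcal{E}\to\mathcal{E}^\vee(-1)$ identifies the degrees of a minimal generating set of $H^1_*(\mathcal{E})$ with those of $H^1_*(\mathcal{E}^\vee\otimes\omega_{\mathbb{P}^3})$ (via $a_i \mapsto 3-a_i$), which forces $\mathcal{A}=\bigoplus_{i=1}^{s}\op3(a_i)$ and $\mathcal{C}=\bigoplus_{i=1}^{s}\op3(-a_i-1)$ to be governed by a single sequence $\boldsymbol{a}$, as in \eqref{m00}. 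Finally, the self-duality $\mathcal{B}\cong\mathcal{B}^\vee(-1)$ inherited from $\mathcal{E}\cong\mathcal{E}^\vee(-1)$ pairs every summand $\op3(k)$ of $\mathcal{B}$ with the summand $\op3(-k-1)$; since for each integer $k$ exactly one of $k,\,-k-1$ is non-negative, I may list these pairs with a sequence $\boldsymbol{b}=(b_1,\dots,b_{s+1})$, $b_j\ge0$, and write $\mathcal{B}=\bigoplus_{j=1}^{s+1}\big(\op3(b_j)\oplus\op3(-b_j-1)\big)$. This produces the monad \eqref{eq6}.

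For the Chern class identity, I would pass to the Grothendieck group. The two defining short exact sequences of the monad,
\begin{equation*}
0\to\mathcal{C}\to\ker\alpha\to\mathcal{E}\to0 \quad\text{and}\quad 0\to\ker\alpha\to\mathcal{B}\to\mathcal{A}\to0,
\end{equation*}
give $[\mathcal{E}]=[\mathcal{B}]-[\mathcal{A}]-[\mathcal{C}]$, hence $\mathrm{ch}(\mathcal{E})=\mathrm{ch}(\mathcal{B})-\mathrm{ch}(\mathcal{A})-\mathrm{ch}(\mathcal{C})$. Writing $h$ for the hyperplane class and using $\mathrm{ch}(\op3(d))=e^{dh}$, the degree-two part is
\begin{equation*}
\mathrm{ch}_2(\mathcal{E})=\tfrac{1}{2}\Big(\sum_{j=1}^{s+1}\big(b_j^2+(b_j+1)^2\big)-\sum_{i=1}^{s}a_i^2-\sum_{i=1}^{s}(a_i+1)^2\Big)h^2.
\end{equation*}
A direct simplification (each $\op3$-pair in $\mathcal{B}$ contributing $2b_j(b_j+1)+1$ and each matched pair from $\mathcal{A},\mathcal{C}$ contributing $2a_i(a_i+1)+1$) collapses the bracket to $2\sum_{j}b_j(b_j+1)-2\sum_{i}a_i(a_i+1)+1$, so that $\mathrm{ch}_2(\mathcal{E})=\big(\sum_{j}b_j(b_j+1)-\sum_{i}a_i(a_i+1)+\tfrac12\big)h^2$.

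To conclude, I would combine this with the standard identity $\mathrm{ch}_2(\mathcal{E})=\tfrac{1}{2}\big(c_1(\mathcal{E})^2-2c_2(\mathcal{E})\big)$ and the hypothesis $c_1(\mathcal{E})=-1$, so that $c_1(\mathcal{E})^2=1$; equating the coefficients of $h^2$ then yields exactly the formula \eqref{eq1}. There is no genuine obstacle in this argument: the existence of the monad is delivered by the preceding remarks, and the Chern class identity is a routine bookkeeping of Chern characters of line bundles. The only point demanding a little care is the correct pairing of the summands of $\mathcal{B}$ together with the matched cancellation of the constant terms $s+1$ and $s$, which is precisely what leaves the clean symmetric expression in \eqref{eq1}.
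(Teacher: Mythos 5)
Your proposal is correct and follows essentially the same route as the paper: there the lemma is presented as a summary of the preceding discussion (Horrocks' minimal monad, the symplectic identification of the generator degrees of $H^1_*(\mathcal{E})$ and $H^1_*(\mathcal{E}^\vee\otimes\omega_{\mathbb{P}^3})$, and the self-duality $\mathcal{B}\cong\mathcal{B}^\vee(-1)$ forcing the paired summands), which is exactly what you assemble. Your Chern-character computation of \eqref{eq1}, which the paper leaves implicit, is the routine verification and is carried out correctly (it reproduces, for instance, $c_2=2s$ for Hartshorne monads and $c_2=a^2+a-b_1^2-b_1-b_2^2-b_2$ for Ein monads).
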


We order the tuples $\boldsymbol{a}$ and $\boldsymbol{b}$ so that $a_1\le \cdots \le a_s$ and $0\le b_1\le \cdots \le b_{s+1}$. The following definition will become relevant later on.

\begin{defi} \label{+monads} \textup{
A monad as in display \eqref{eq6} is called \textit{positive} if all summands of its right-hand term have positive degree; that is $a_i>0$ for each $i=1,\dots,s$. If all summands of its right-hand term have non-negative degree, then the monad is said to be \textit{non-negative}. If any of summands in the right-hand term is negative, then  the monad is said to be \textit{negative}.
} \end{defi}

Here is a first, immediate application of the non-negativity condition.

\begin{Lemma}\label{no syz deg 0}
The cohomology bundle of a non-negative monad as in display \eqref{eq6} is stable if and only if the morphism $\alpha$ does not admit a syzygy of degree $\ge0$.
\end{Lemma}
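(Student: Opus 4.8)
The plan is to reduce stability to a cohomological vanishing and then translate that vanishing into the language of syzygies. Recall first the standard section-criterion for rank $2$ bundles with $c_1=-1$: the cohomology $\mathcal{E}$ of \eqref{eq6} is a rank $2$ bundle with $c_1(\mathcal{E})=-1$ by construction, and such a bundle is stable if and only if $H^0(\mathcal{E})=0$. Indeed, a nonzero section produces a saturated sub-line-bundle $\op3(d)\hookrightarrow\mathcal{E}$ with $d\ge 0>-1/2=\mu(\mathcal{E})$, while conversely any destabilizing saturated sub-line-bundle $\op3(d)$ necessarily has $d\ge 0$ and yields $0\neq H^0(\mathcal{E}(-d))\hookrightarrow H^0(\mathcal{E})$. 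Since for $t\le 0$ the twist map $H^0(\mathcal{E}(t))\hookrightarrow H^0(\mathcal{E})$ is injective, stability of $\mathcal{E}$ is equivalent to the vanishing $H^0(\mathcal{E}(t))=0$ for all $t\le 0$.

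Next I would split the monad \eqref{eq6} into its two defining short exact sequences. Writing $\mathcal{C}=\bigoplus_{i=1}^s\op3(-a_i-1)$, $\mathcal{A}=\bigoplus_{i=1}^s\op3(a_i)$ and $K=\ker\alpha$, the surjectivity of $\alpha$ and the fact that $\beta$ is a subbundle inclusion (so that $\Image\beta\cong\mathcal{C}$) give $0\to K\to\mathcal{B}\xrightarrow{\alpha}\mathcal{A}\to 0$ and $0\to\mathcal{C}\xrightarrow{\beta}K\to\mathcal{E}\to 0$. Twisting the second sequence by $\op3(t)$ and taking cohomology yields
$$0\to H^0(\mathcal{C}(t))\to H^0(K(t))\to H^0(\mathcal{E}(t))\to H^1(\mathcal{C}(t)).$$
Here the non-negativity hypothesis enters decisively: since $a_i\ge 0$, every summand $\op3(t-a_i-1)$ of $\mathcal{C}(t)$ has degree $\le -1$ when $t\le 0$, so $H^0(\mathcal{C}(t))=0$; and $H^1$ of any line bundle on $\mathbb{P}^3$ vanishes, so $H^1(\mathcal{C}(t))=0$. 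Hence $H^0(K(t))\cong H^0(\mathcal{E}(t))$ for every $t\le 0$. Note that the case $t=0$ already requires $a_i\ge 0$ for the argument to run, so non-negativity is exactly the hypothesis being used.

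It remains to identify the groups $H^0(K(t))$, $t\le 0$, with syzygies of $\alpha$. Adopting the natural convention that a syzygy of $\alpha$ of degree $d$ is a nonzero morphism $\op3(d)\to\mathcal{B}$ whose composite with $\alpha$ vanishes, such a syzygy is precisely a nonzero element of $\Hom(\op3(d),K)=H^0(K(-d))$. Thus $\alpha$ admits a syzygy of degree $\ge 0$ exactly when $H^0(K(t))\neq 0$ for some $t\le 0$. Combining the three equivalences, $\mathcal{E}$ is stable iff $H^0(\mathcal{E}(t))=0$ for all $t\le 0$, iff $H^0(K(t))=0$ for all $t\le 0$, iff $\alpha$ admits no syzygy of degree $\ge 0$, which is the assertion. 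As a consistency check, the syzygies built into the monad via $\Image\beta$ correspond to the maps $\op3(-a_i-1)\to\mathcal{B}$, hence have degree $-(a_i+1)\le -1<0$, so they are correctly excluded.

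I expect the only genuinely delicate point to be the bookkeeping in this last step: pinning down the degree convention for syzygies so that ``degree $\ge 0$'' corresponds exactly to the non-positive twists $t\le 0$ in which the isomorphism $H^0(K(t))\cong H^0(\mathcal{E}(t))$ holds, and confirming that the negative-degree syzygies from $\Image\beta$ fall outside this range. Everything else is a formal consequence of the absence of intermediate cohomology for line bundles on $\mathbb{P}^3$ together with the standard stability criterion, and I would present it in that order.
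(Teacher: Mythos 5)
Your proof is correct and follows essentially the same route as the paper's: both reduce stability to the vanishing $H^0(\mathcal{E}(-p))=0$ for all $p\ge0$, use the exact sequence $0\to\bigoplus_i\op3(-a_i-1-p)\to\ker\alpha(-p)\to\mathcal{E}(-p)\to0$ together with non-negativity to identify $H^0(\mathcal{E}(-p))$ with $H^0(\ker\alpha(-p))$, and interpret the latter as the space of degree-$p$ syzygies of $\alpha$. The only difference is expository: you prove the standard stability criterion and fix the syzygy degree convention explicitly, whereas the paper simply invokes them.
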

\begin{proof}
Recall that a rank 2 bundle $\E$ with $c_1(\E)=-1$ is stable if and only if $H^0(\E(-p))=0$ for every $p\ge0$. From the exact sequence
$$ 0 \to \bigoplus_{i=1}^{s}\op3(-a_i-1-p) \to \ker\alpha(-p) \to \E(-p) \to 0 $$
we can see that if $a_i\ge0$, then $H^0(\E(-p))\simeq H^0(\ker\alpha(-p))$ when $-p\le a_1$; but the latter is just the space of degree $p$ syzygies of $\alpha$, that is, morphisms
$$ \tau:\op3(p)\to\bigoplus_{j=1}^{s+1}\Big(\op3(b_j)\oplus\op3(-b_j-1)\Big)$$
such that $\alpha\circ\tau=0$.
\end{proof}

The next step is to understand the role of the isomorphism $f:\mathcal{E}\to\mathcal{E}^\vee(-1)$ at the level of monads. One can choose a lift of $f$ to a morphism of monads
\begin{equation} \begin{tikzcd}
\bigoplus_{i=1}^{s}\op3(-a_i-1) \arrow{r}{\beta} \arrow[swap]{d}{v} & \mathcal{B}\arrow{r}{\alpha}\arrow[swap]{d}{q}&\bigoplus_{i=1}^{s}\op3(a_i)\arrow[swap]{d}{u} \\%
\bigoplus_{i=1}^{s}\op3(-a_i-1) \arrow{r}{\alpha^*(-1)}& \mathcal{B}^\vee(-1)\arrow{r}{\beta^*(-1)}&\bigoplus_{i=1}^{s}\op3(a_i)
\end{tikzcd} \label{diag-q}
\end{equation}
It follows that the morphism of monads
\begin{equation} \begin{tikzcd}
\bigoplus_{i=1}^{s}\op3(-a_i-1) \arrow{r}{\beta} \arrow[swap]{d}{u^*(-1)} & \mathcal{B}\arrow{r}{\alpha}\arrow[swap]{d}{q^*(-1)}&\bigoplus_{i=1}^{s}\op3(a_i)\arrow[swap]{d}{v^*(-1)} \\%
\bigoplus_{i=1}^{s}\op3(-a_i-1) \arrow{r}{\alpha^*(-1)}& \mathcal{B}^\vee(-1)\arrow{r}{\beta^*(-1)}&\bigoplus_{i=1}^{s}\op3(a_i)
\end{tikzcd} \label{diag-q2}
\end{equation}
is a lift of the isomorphism $f^*(-1):\mathcal{E}\to\mathcal{E}^\vee(-1)$. The equality $f^*(-1)=-f$ implies that $q^*(-1)=-q$ and $v=-u^*(-1)$. However, it is not clear, in general, whether $q$ and $u$ are also isomorphisms.

\begin{Lemma}\label{lem self-dual}
Let $\mathcal{E}$ be a rank 2 bundle with $c_1(\mathcal{E})=-1$ whose Horrocks minimal monad $\mathbf{M}$ is homotopy free. Then $\mathbf{M}$ is isomorphic to a monad of the form
\begin{equation} \label{sd-monad}
\bigoplus_{i=1}^{s}\op3(-a_i-1) \stackrel{\beta}{\longrightarrow} \bigoplus_{j=1}^{s+1}\Big(\op3(b_j)\oplus\op3(-b_j-1)\Big) \stackrel{\beta^*(-1)\circ\Omega}{\longrightarrow} \bigoplus_{i=1}^{s}\op3(a_i),
\end{equation}
so that $\beta$ must satisfy the following equation 
\begin{equation}\label{eq-beta}
\beta^*(-1)\circ\Omega\circ\beta=0 ~~,
\end{equation}
where $\Omega$ is the standard symplectic form.
\end{Lemma}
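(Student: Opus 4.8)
The plan is to use homotopy-freeness to rigidify the lift $(v,q,u)$ of the twisted symplectic isomorphism $f$ from diagram \eqref{diag-q}, turning its middle component $q$ into an honest symplectic structure on $\mathcal{B}$, and then to absorb the outer components $u,v$ by an isomorphism of monads. The starting observation is that, by the exact sequence \eqref{sqc-homs} together with the vanishing \eqref{vanish2}, homotopy-freeness of $\mathbf{M}$ makes the lift of any morphism of cohomology bundles \emph{unique}. Since $\mathcal{B}\cong\mathcal{B}^\vee(-1)$, the dual-twisted monad $\mathbf{M}^\vee(-1)$ has the same shape as $\mathbf{M}$ and satisfies the same conditions \eqref{vanish2}; hence it is homotopy free as well, and lifts of morphisms $\mathbf{M}\to\mathbf{M}^\vee(-1)$, $\mathbf{M}^\vee(-1)\to\mathbf{M}$, and their composites are all unique.

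First I would confirm the skew-symmetry of $q$ and upgrade it to an isomorphism. Applying $(-)^*(-1)$ to diagram \eqref{diag-q} produces the lift $(u^*(-1),q^*(-1),v^*(-1))$ of $f^*(-1)=-f$ displayed in \eqref{diag-q2}; since $-(v,q,u)$ is also a lift of $-f$, uniqueness gives $q^*(-1)=-q$ and $v=-u^*(-1)$, as already recorded before the statement. For invertibility, the inverse $f^{-1}\colon\mathcal{E}^\vee(-1)\to\mathcal{E}$ lifts to some $(v',q',u')\colon\mathbf{M}^\vee(-1)\to\mathbf{M}$, and because the identity morphisms of monads are the unique lifts of $f^{-1}\circ f=\id_{\mathcal{E}}$ and $f\circ f^{-1}=\id_{\mathcal{E}^\vee(-1)}$, composing the two lifts forces $q'\circ q=\id$ and $q\circ q'=\id$ (and likewise for $u,v$). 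Thus $q\colon\mathcal{B}\to\mathcal{B}^\vee(-1)$ is a nondegenerate alternating form, i.e. a symplectic structure on $\mathcal{B}$.

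Next I would bring $q$ into standard shape and eliminate the outer terms. The key input is the linear-algebra fact that any symplectic isomorphism $q$ of $\mathcal{B}=\bigoplus_{j}\big(\op3(b_j)\oplus\op3(-b_j-1)\big)$ is congruent to the standard form $\Omega$, i.e. there is an automorphism $\phi$ of $\mathcal{B}$ with $q=\phi^*(-1)\circ\Omega\circ\phi$. Granting this, I would define a new monad $\mathbf{M}'$ by $\beta'=\phi\circ\beta$ and $\alpha'=u\circ\alpha\circ\phi^{-1}$, so that the triple $(\id,\phi,u)$ is an isomorphism $\mathbf{M}\to\mathbf{M}'$. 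Rewriting the right-hand square of \eqref{diag-q} as $u\circ\alpha=\beta^*(-1)\circ q$ and substituting the congruence, a direct computation gives $\alpha'=\beta^*(-1)\circ\phi^*(-1)\circ\Omega=(\phi\circ\beta)^*(-1)\circ\Omega=(\beta')^*(-1)\circ\Omega$, which is exactly the self-dual shape \eqref{sd-monad}. Finally, relation \eqref{eq-beta} is nothing but the monad identity $\alpha'\circ\beta'=0$ read through $\alpha'=(\beta')^*(-1)\circ\Omega$.

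I expect the main obstacle to be the graded symplectic congruence invoked above: one must check that $q$ can be reduced to $\Omega$ while staying inside the \emph{graded} automorphism group of $\mathcal{B}$, and not merely as a form over the fraction field. This is controlled by the degrees of the summands: among the blocks of $q$, the only ones that can be nonzero scalars are those pairing $\op3(b_j)$ with its partner $\op3(-b_j-1)$ (since $b_j+(-b_j-1)=-1$), and these invertible scalar pivots are precisely what allow a symplectic Gram--Schmidt procedure to proceed through homogeneous automorphisms. Keeping careful track of these degree constraints, and verifying that the resulting $\phi$ is a genuine automorphism of $\mathcal{B}$ as a sum of line bundles, is the delicate part of the argument.
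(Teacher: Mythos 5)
Your proposal is correct and follows essentially the same route as the paper's proof: homotopy-freeness forces the lift $(v,q,u)$ of the twisted symplectic isomorphism $f$ to consist of isomorphisms with $q^*(-1)=-q$, the form $q$ is then reduced to the standard $\Omega$ by an automorphism of $\mathcal{B}$, and \eqref{eq-beta} falls out of the monad identity $\alpha\circ\beta=0$ transported through that automorphism. If anything, your write-up is more careful than the paper's: you actually prove that $q$ and $u$ are invertible by composing the unique lifts of $f$ and $f^{-1}$ (the paper cites its earlier remark following Barth--Hulek), and you state the reduction step as a graded congruence $q=\phi^*(-1)\circ\Omega\circ\phi$ rather than the paper's orthogonal conjugation $\varphi\circ q\circ\varphi^{*}=\Omega$, which is the more standard formulation; both you and the paper leave the graded symplectic Gram--Schmidt argument itself unproved, though you at least flag it as the delicate point and indicate why the constant pivots pairing $\op3(b_j)$ with $\op3(-b_j-1)$ make it work.
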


\begin{proof}
When the Horrocks monad for $\E$ is homotopy free, then, as it was observed above, $q$ and $u$ are isomorphisms. Since  $q^*(-1)=-q$, then we can find $\varphi\in\operatorname{O}(\mathcal{B})$, ie. $\varphi$ is orthogonal ($\varphi^{-1}=\varphi^*$), such that
$$ \varphi \circ q\circ \varphi^* = \Omega = \bigoplus_{j=1}^{s+1} \left( \begin{array}{cc} 0 & 1 \\ -1 & 0 \end{array} \right) , $$
with each $2\times2$ block being regarded as a morphism
$$ \op3(b_j)\oplus\op3(-b_j-1) \longrightarrow \op3(-b_j-1)\oplus\op3(b_j) . $$
In addition, the diagram in display \eqref{diag-q} also implies that $\beta^*(-1)\circ \Omega=u\circ\alpha$, so that $\beta$ must satisfy the following equation in display \eqref{eq-beta}.

Note that the morphism $q$, and therefore also $\varphi$, is uniquely determined by $f$ (which is unique when $\E$ is stable).
\end{proof}


\section{Serre Construction}\label{Serre}

Another way of describing rank 2 bundles on $\mathbb{P}^3$ is provided by the \emph{Serre construction} which, roughly speaking, provides a correspondence between local complete intersection curves and rank 2 vector bundles equipped with a global section. More precisely, let $Y\subset\mathbb{P}^3$ be a local complete intersection curve such that $H^0(\omega_Y(2k+3))$ admits a non-vanishing section $\xi$; since 
$$ H^0(\omega_Y(5-2k)) \simeq {\rm Ext}^1(\mathcal{I}_Y(k-1),\op3(-k)), $$
the section $\xi$ induces and an extension of the form
\begin{equation}\label{eq-HS}
0 \longrightarrow \op3(-k) \stackrel{s}{\longrightarrow} \mathcal{E} \longrightarrow \mathcal{I}_Y(k-1) \longrightarrow 0,
\end{equation}
where $\mathcal{E}$ is a rank 2 bundle with $c_1(\mathcal{E})=-1$ and $s\in H^0(\mathcal{E}(k))$; moreover, 
\begin{equation}\label{c2-degY}
c_2(\mathcal{E})=\deg(Y)+k-k^2.
\end{equation}
One can also check that $\mathcal{E}$ is stable if and only if $Y$ is not contained in a surface of degree $k-1$, and that there is an isomorphism of graded modules
$$ H^1_*(\mathcal{E}) \simeq H^1_*(\mathcal{I}_Y)[k]. $$

Conversely, given a rank 2 bundle $\mathcal{E}$ and a non-trivial section 
$s\in H^0(\mathcal{E}(k))$ that does not vanish in codimension 1, then we obtain an exact sequence like the one in display \eqref{eq-HS}, where $Y=(s)_0$ is the scheme of zeros of $s$. For further details on the Serre construction, see \cite[Theorem 1.1]{H78}.

There are two families of minimal Horrocks monads that have been well explored in the literature. The first one, which we will call \emph{Hartshorne monads} are of the form
\begin{equation} \label{h-monad}
\op3(-2)^{\oplus s} \longrightarrow \op3(-1)^{\oplus s+1} \oplus \op3^{\oplus s+1} \longrightarrow \op3(1)^{\oplus s}.
\end{equation}
If $\mathcal{E}$ is a stable rank 2 bundle given as the cohomology of a monad as above, then $c_1(\mathcal{E})=-1$ and $c_2(\mathcal{E})=2s$. In terms of the Serre construction, the bundles obtained via extensions of the form
$$ 0 \to \op3(-2) \to \mathcal{E} \to \mathcal{I}_Y(1) \to 0 $$
where $Y$ is a disjoint union of $s$ conics, are examples of bundles that arise as cohomology of a monad as in display \eqref{h-monad}.

The second family are the so-called \emph{Ein monads}, see \cite[equation (3.1.A)]{Ein88},
\begin{equation} \label{e-monad}
\op3(-a-1) \longrightarrow \op3(-b_2-1) \oplus \op3(-b_1-1) \oplus \op3(b_1) \oplus \op3(b_2) \longrightarrow \op3(a),
\end{equation}
where $a>b_1+b_2$. If $\mathcal{E}$ is a stable rank 2 bundle given as the cohomology of a monad as display \eqref{e-monad}, then $c_1(\mathcal{E})=-1$ and $c_2(\mathcal{E})=a^2-b_1^2-b_2^2+a-b_1-b_2$. In terms of the Serre construction, such bundles can be represented via extensions of the form
$$ 0 \to \op3(-a-1) \to \mathcal{E} \to \mathcal{I}_Y(a) \to 0 $$
where $Y=Y_1\sqcup Y_2$ with $Y_j$ being a complete intersection curve of bidegree $(a-b_j,a+b_j+1)$, $j=1,2$.

Below is an analogue of \cite[Lemma 4.8]{HR91}, which turns out to be one the most important tools to construct new examples minimal Horrocks monads out of known ones.

\begin{Lemma}\label{lema48}
Let $(\E,\sigma)$ be pair consisting of a stable rank 2 vector bundle $\E$ with $c_1(\E)=-1$ and $c_2(\E)=n$ and a section $\sigma\in H^0(\E(r))$ with $r>0$ such that $X:=(\sigma)_0$ is curve. If $C$ is a complete intersection curve of type $(u,v)$ disjoint from $X$ satisfying $u+v=2r-1$, then there is a pair $(\E',\sigma')$ consisting of a stable rank 2 vector bundle $\E$ with $c_1(\E')=-1$ and $c_2(\E')=n+uv$ and a section $\sigma'\in H^0(\E'(r))$ such that $(\sigma)_0=X\sqcup C$. Moreover, if $\E$ is the cohomology of a minimal monad of the form
$$ \mathbf{M}:\ \ \  \ \mathcal{C} \longrightarrow \mathcal{B} \longrightarrow \mathcal{A}, $$
then $\E'$ is the cohomology of a minimal monad of the form
$$ \mathbf{M}':\ \ \op3(-r)\oplus\mathcal{C} \longrightarrow \op3(r-1-u)\oplus\op3(r-1-v)\oplus\mathcal{B} \longrightarrow \op3(r-1)\oplus\mathcal{A}.$$

\end{Lemma}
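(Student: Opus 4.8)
The plan is to construct the pair $(\E',\sigma')$ by applying the Serre construction of Section~\ref{Serre} to the disjoint union $X':=X\sqcup C$, and then to read off the minimal monad of $\E'$ from the minimal free resolution of its Rao module, comparing it with that of $\E$.

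First I would isolate the numerical input. The existence of the extension \eqref{eq-HS} for $\E$ with $k=r$, together with $X=(\sigma)_0$, forces by adjunction for the zero locus of a section of $\E(r)$ the subcanonical relation $\omega_X\cong\mathcal{O}_X(2r-5)$. On the other hand $C$ is arithmetically Gorenstein, so $\omega_C\cong\mathcal{O}_C(u+v-4)$, and the hypothesis $u+v=2r-1$ gives precisely $\omega_C\cong\mathcal{O}_C(2r-5)$. Hence $\omega_{X'}(5-2r)\cong\mathcal{O}_{X'}$, and gluing the nowhere-vanishing section attached to $\sigma$ on $X$ to the unit section on $C$ — where disjointness is essential — produces a nowhere-vanishing section of $\omega_{X'}(5-2r)$, matching the $\mathrm{Ext}^1(\mathcal{I}_{X'}(r-1),\op3(-r))$ appearing in the Serre correspondence. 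Since $X'$ is again local complete intersection, \cite[Theorem 1.1]{H78} yields an extension
$$0\longrightarrow\op3(-r)\longrightarrow\E'\longrightarrow\mathcal{I}_{X'}(r-1)\longrightarrow0$$
with $\E'$ locally free of rank $2$, $c_1(\E')=-1$, and a section $\sigma'\in H^0(\E'(r))$ such that $(\sigma')_0=X'$. The Chern class follows from \eqref{c2-degY}: since $\deg C=uv$ and $c_2(\E)=\deg X+r-r^2=n$, we get $c_2(\E')=\deg X'+r-r^2=n+uv$. Stability is immediate from the criterion recalled after \eqref{c2-degY}, namely that $\E'$ is stable iff $X'$ lies on no surface of degree $r-1$: as $X\subset X'$ and $\E$ is stable, $X$ lies on no such surface, hence neither does $X'$.

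For the monad I would work with Rao modules, using $\mathcal{I}_{X'}\subset\mathcal{I}_X$ with quotient supported on $C$, which by disjointness gives $0\to\mathcal{I}_{X'}\to\mathcal{I}_X\to\mathcal{O}_C\to0$. Resolving $\mathcal{O}_C$ by the Koszul complex of its two defining equations, of degrees $u$ and $v$, and twisting by $r-1$ yields
$$0\to\op3(-r)\to\op3(r-1-u)\oplus\op3(r-1-v)\to\op3(r-1)\to\mathcal{O}_C(r-1)\to0,$$
where the leftmost term is $\op3(r-1-u-v)=\op3(-r)$. Taking the mapping cone of a lift of $\mathcal{I}_X\to\mathcal{O}_C$ to the sheafified minimal free resolutions exhibits a resolution of $\mathcal{I}_{X'}$ whose free terms are those of $\mathcal{I}_X$ together with the three Koszul terms above. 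Since $H^1_*(\E')\cong H^1_*(\mathcal{I}_{X'})[r]$ and, as recalled in Section~\ref{section1}, the terms $\mathcal{A}$, $\mathcal{B}$, $\mathcal{C}$ of the monad are read off from a minimal free presentation of the Rao module and its dual, these extra terms contribute exactly $\op3(r-1)$ to $\mathcal{A}$ and $\op3(r-1-u)\oplus\op3(r-1-v)$ to $\mathcal{B}$; the self-duality $\mathcal{B}\cong\mathcal{B}^\vee(-1)$ of Lemma~\ref{lema3}, under which $\op3(r-1)\leftrightarrow\op3(-r)$ and $\op3(r-1-u)\leftrightarrow\op3(r-1-v)$, then forces the summand $\op3(-r)$ in $\mathcal{C}$. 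This reproduces the stated shape of $\mathbf{M}'$.

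The delicate point, and the one I expect to be the main obstacle, is the passage to the \emph{minimal} monad: one must verify that the three new line bundles genuinely survive as minimal generators and syzygies, with no cancellation against the terms of $\mathbf{M}$, and equivalently at the level of sheaves that the off-diagonal gluing entries of $\alpha'$ and $\beta'$ can be chosen so that $\alpha'$ is surjective and the cohomology of $\mathbf{M}'$ is $\E'$ rather than the split sheaf attached to $X\sqcup C$. This is exactly where disjointness of $X$ and $C$ re-enters: it ensures that the comparison map in the mapping cone is minimal in the relevant degrees, so that the minimality of $\mathbf{M}$ is inherited by $\mathbf{M}'$.
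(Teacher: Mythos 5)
Your first half --- applying the Serre construction to $Y:=X\sqcup C$ (your $X'$), the computation $\omega_Y\cong\mathcal{O}_Y(2r-5)$, the value $c_2(\E')=n+uv$, and stability because $X$, hence $Y$, lies on no surface of degree $r-1$ --- is correct and is exactly the paper's argument. The genuine gap is in the monad half, and it is not the minimality point you flag at the end; it is the identification of the Rao module. By the correspondence recalled in Section \ref{section1}, the terms $\op3(r-1)\oplus\mathcal{A}$ and $\op3(r-1-u)\oplus\op3(r-1-v)\oplus\mathcal{B}$ of $\mathbf{M}'$ must be the sheafified generators and first syzygies of a minimal presentation of the graded module $H^1_*(\E')\cong H^1_*(\mathcal{I}_Y(r-1))$; they are \emph{not} read off from a free resolution of the ideal sheaf $\mathcal{I}_Y$, which is a different object, related to the Rao module only through a duality you never invoke. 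Your exact sequence $0\to\mathcal{I}_Y\to\mathcal{I}_X\to\mathcal{O}_C\to0$ cannot bridge this directly: on graded modules the map $H^0_*(\mathcal{I}_X)\to H^0_*(\mathcal{O}_C)$ is \emph{not} surjective (e.g.\ the unit section of $\mathcal{O}_C$ is not in the image), and since $H^1_*(\mathcal{I}_X)\neq0\neq H^1_*(\mathcal{O}_C)$, the long exact sequence only exhibits $H^1_*(\mathcal{I}_Y)$ as a generally nontrivial extension
\[ 0\to\coker\big(H^0_*(\mathcal{I}_X)\to H^0_*(\mathcal{O}_C)\big)\to H^1_*(\mathcal{I}_Y)\to\ker\big(H^1_*(\mathcal{I}_X)\to H^1_*(\mathcal{O}_C)\big)\to0, \]
from which no presentation falls out. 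Moreover, even your bookkeeping for the ideal is inverted: in the mapping-cone resolution of $\mathcal{I}_Y(r-1)$ built from your sequence, the summand $\op3(r-1)$ lands among the first syzygies and $\op3(r-1-u)\oplus\op3(r-1-v)$ among the generators, so no direct reading produces your assignment of $\op3(r-1)$ to the right-hand term of the monad; and the appeal to self-duality to ``force'' $\op3(-r)$ into the left-hand term presupposes knowing the other two terms, which is precisely what is not established.

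The paper supplies the missing idea by swapping the roles of the two curves: it uses $0\to\mathcal{I}_Y(r-1)\to\mathcal{I}_C(r-1)\to\mathcal{O}_X(r-1)\to0$, i.e.\ quotient supported on $X$ and kernel the ideal of the ACM curve. Because $C$ is a complete intersection, $H^1_*(\mathcal{I}_C)=0$, so the Rao module is honestly a single cokernel of graded modules,
\[ H^1_*(\mathcal{I}_Y(r-1))\cong\coker\big(H^0_*(\mathcal{I}_C(r-1))\to H^0_*(\mathcal{O}_X(r-1))\big). \]
A presentation of the source is the Koszul complex \eqref{res-ic}; a presentation of the target, $B\to S(r-1)\oplus A\to H^0_*(\mathcal{O}_X(r-1))\to0$, comes from the monad $\mathbf{M}$ of $\E$ together with the section $\sigma$ via the four-term sequence \eqref{sqc-ox}. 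Splicing the two yields $S(r-1-u)\oplus S(r-1-v)\oplus B\to S(r-1)\oplus A\to H^1_*(\mathcal{I}_Y(r-1))\to0$, which is exactly the presentation whose sheafification gives $\mathbf{M}'$. (The minimality of this presentation, your other worry, is in fact also glossed over by the paper; but the structural identification above is the essential step your argument lacks, and without it the claimed distribution of the three Koszul summands among $\mathcal{A}'$, $\mathcal{B}'$, $\mathcal{C}'$ is unsupported.)
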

\begin{proof}
Let $Y:=X\sqcup C$. Since
$$ \omega_Y=\omega_X\oplus\omega_C = \mathcal{O}_X(2r-5) \oplus \mathcal{O}_C(2r-5) = \mathcal{O}_Y(2r-5), $$
the Serre construction guarantees the existence of a pair $(\E',\sigma')$ with the desired properties. Since $\E$ is stable, the curve $X$ is not contained in a surface of degree $r-1$; thus clearly, $Y$ is not contained in a surface of degree $r-1$ either, so $\E'$ is also stable.

The section $\sigma:\op3(-r)\to \E$ that defines $X$ and the monadic presentation of $\E$ leads to the following short exact sequence
\begin{equation}\label{sqc-ox}
0 \longrightarrow \op3(-r)\oplus\mathcal{C} \longrightarrow
\mathcal{B} \longrightarrow \op3(r-1)\oplus\mathcal{A} \longrightarrow \mathcal{O}_X(r-1) \longrightarrow 0,
\end{equation}
from which one can obtain a minimal presentation for the graded module $H^0_*(\mathcal{O}_X)$.

Next, we consider the short exact sequence
\begin{equation}\label{iy-ic-ox}
0 \to \mathcal{I}_Y(r-1) \to \mathcal{I}_C(r-1) \to \mathcal{O}_X(r-1) \to 0;
\end{equation}
and the Koszul resolution of the complete intersection curve $C$
\begin{equation}\label{res-ic}
0 \to \op3(-r) \to \op3(r-1-u)\oplus\op3(r-1-v) \to \mathcal{I}_C(r-1) \to 0,
\end{equation}
which provides a minimal presentation for the graded module $H^0_*(\mathcal{I}_C)$.

The exact sequence in display \eqref{iy-ic-ox} induces a morphism of graded modules 
$$ H^0_*(\mathcal{I}_C(r-1))\to H^0_*(\mathcal{O}_X(r-1)) $$ whose cokernel is precisely $H^1_*(\mathcal{I}_Y)$. The minimal presentation of the latter can then be deduced from the minimal presentations for $H^0_*(\mathcal{I}_C)$ and $H^0_*(\mathcal{O}_X)$ obtained above; to be precise, we have that
$$ S(r-1-u)\oplus S(r-1-v)\oplus B \longrightarrow S(r-1)\oplus A \longrightarrow H^1_*(\mathcal{I}_Y(r-1)) \to 0 $$
We then just need to recall that $H^1_*(\E')\simeq H^1_*(\mathcal{I}_Y(r-1))$
in order to obtain the desired monadic representation for $\E'$. 
\end{proof}
\bigskip


\section{Spectra for rank 2 stable bundles with odd determinant}\label{sec:spec}

The \textit{spectrum} of a vector bundle $\mathcal{E}$ on $\mathbb{P}^3$ is a sequence of integers that 
encodes partial information about the cohomology modules $H^1_*(\mathcal{E})$ and $H^2_*(\mathcal{E})$. It was originally defined by Barth and Elencwajg \cite{BE78}, and Hartshorne extended this notion to rank 2 reflexive sheaves, see \cite[Theorem 7.1, p.151]{H80}. We will follow Hartshorne's approach, even though we only work with vector bundles.

Let $\mathcal{E}$ be a rank $2$ stable vector bundle on $\mathbb{P}^3$ with $c_1(\mathcal{E})=-1$; set $n:=c_2(\mathcal{E})$, and recall that this is necessarily even. The \textit{spectrum} of $\mathcal{E}$ is an unique multiset of integers $\X(\mathcal{E})=\{k_1,k_2,\cdots,k_n\}$ satisfying the 
following properties
\begin{enumerate}[label=(\alph*)]
\item \label{itemb1} $h^1(\mathbb{P}^3,\mathcal{E}(l)) = h^0(\mathbb{P}^1,\mathcal{H}(l+1))$ for $l\leq-1$ and
\item $h^2(\mathbb{P}^3,\mathcal{E}(l)) = h^1(\mathbb{P}^1,\mathcal{H}(l+1))$ for $l\geq-2$,
\end{enumerate}
where $\mathcal{H}=\bigoplus\op1(k_i)$. 

If a sequence of integers $\{k_1,k_2,\cdots,k_{n}\}$ is the spectrum of a rank $2$ stable bundle 
with $c_1(\mathcal{E})=-1$, then the following properties hold, see \cite[Section 7]{H80}:
\begin{enumerate}[label=\textbf{S.\arabic*}]
\item \label{itema1}$\{k_i\}=\{-k_i-1\}$;
\item \label{itema2}Any integer $k$ between two integers of $\X$ also belongs to spectrum $\X$;
\item\label{itema4} If $k=\max\{-k_i\}$ and there is an integer $u$ with $-k\leq u\leq-2$ that occurs just once in $\X$, then each $k_i\in\X$ with $-k\leq k_i\leq u$ occurs exactly once, see \cite[Proposition 5.1]{H82}.
\end{enumerate}

We list in Table \ref{table:4} all sequences of up to 8 integers satisfying properties \ref{itema1}--\ref{itema4}; we denote $r_j:=\{-j-1,j\}$ and $r_jr_i:=r_j\cup r_i = \{-j-1,j,-i-1,i\}$, and 
so on. These are the possible spectra for stable vector bundles of rank $2$ with odd determinant. However, as we will see in Section \ref{section4}, the conditions \ref{itema1}--\ref{itema4} are not 
sufficient to ensure the existence of a stable rank $2$ vector bundle  with a given spectrum $\X$.

\begin{table}[h]
\begin{tabular}{|c|c|}\hline
$n$ & $\X$\\
\hline
$2$& $\{r_0\}$\\
\hline
4& $\{r_0^2\}, \{r_0r_1\}$\\
\hline
6 & $\{r_0^3\}, \{r_0^2r_1\}, \{r_0r_1^2\}, \{r_0r_1r_2\}$\\
\hline
8& $\{r_0^4\}, \{r_0^3r_1\}, \{r_0^2r_1^2\}, \{r_0^2r_1r_2\}, \{r_0r_1^2r_2\}, \{r_0r_1^3\}, \{r_0r_1r_2r_3\}$\\
\hline
\end{tabular}
\medskip
\caption{Possible spectra for stable rank $2$ vector bundles with $c_1(\mathcal{E})=-1$ and $c_2(\mathcal{E})\leq8$.}
\label{table:4}
\end{table}

\begin{Remark}
We observe that taking $k=3$ and $u=-2$ in property \ref{itema4}, one can eliminate the multiset $\X=\{r_0r_1r_2^2\}$ as a possible spectrum of a stable rank $2$ vector bundle on $\mathbb{P}^3$ with $c_1(\mathcal{E})=-1$ and $c_2(\mathcal{E})=8$.
\end{Remark}

The spectra of a given length can be ordered in the following way: if $\mathcal{X}^n=\{k_1,k_2,\cdots,k_{n}\}$ and ${\mathcal{S}}^n=\{k'_1,k'_2,\cdots,k'_{n}\}$ are spectra with $n$ integers in non-descending order, then
we say $\mathcal{X}^n>{\mathcal{S}}^n$ provided the left-most nonzero integer $k_i-k'_i$ 
is positive. For example, if we look the spectra of the Table \ref{table:4} for $c_2=4$, then we have
$$\mathcal{X}_1^4:=\{r_0^2\}=\{-1,-1, 0,0\}>\{-2, -1, 0, 1\}=\{r_0r_1\}=:\mathcal{X}_2^4.$$
The spectra in the lines of Table \ref{table:4} are listed in decreasing order, from left to right.


\section{Possible minimal monads for a stable bundle with $c_2\leq8$} \label{sec:monads}

Choosing one of the numerical multisets $\X$ displayed in Table \ref{table:4}, our goal in this section is to find all possible minimal Horrocks monads for stable rank $2$ bundles $\mathcal{E}$ on $\mathbb{P}^3$ with fixed spectrum $\X$.

If $s(k_j)$ stands the 
number of repetitions of the integer $k_j$ in the spectrum, then we can write 
\begin{equation}\label{eq3}
\mathcal{X}=\{{(-k-1)}^{s(k)}...,0^{s(0)},\cdots, k^{s(k)}\}.
\end{equation}
Since the first cohomology module $M$ of $\mathcal{E}$ is a graded module over $S=k[X_0,X_1,X_2,X_3]$, we denote by $M_l$ the $l$-th graded piece of $M$; set $m_l:=\dim M_l=h^1(\E(l))$ and let $\rho(l)$ denote the number of minimal generators for $M$ of degree $l$. As $k$ is the largest integer that appears in the spectrum, we have $s(k+1)=0$, and item \ref{itemb1} of the definition of spectrum implies that 
\begin{equation}\label{eq3'}
s(k)=m_{-k-1}=\rho(-k-1).
\end{equation}

This means that the vector bundle $\mathcal{A}$ on the right-hand side of the minimal monad associated to a stable rank $2$ vector bundle $\E$ and spectrum $\X$ as in display \eqref{eq3} has $s(k)$ summands of degree $-k-1$. From definition of spectrum it is easy to see that 
\begin{equation}
    m_{-i}-m_{-i-1}=\displaystyle\sum_{j\geq i-1}s(j), \ i\geq1.
\end{equation}
The tool used for finding the other summands of $\mathcal{A}$ is provided by the result 
below, whose proof is an adaptation of the arguments in \cite[Proposition 3.1]{HR91} for bundles with even determinant.

\begin{Theorem}\label{teo1}
Let $\mathcal{E}$ be a stable vector bundle of rank $2$ on $\mathbb{P}^3$ with $c_1(\mathcal{E})=-1$ and spectrum as in display \eqref{eq3}. 
We have, for $0\leq i < k$,
\begin{equation}\label{ineq1}
s(i)-2\displaystyle\sum_{j\geq i+1}s(j)\leq\rho(-i-1)\leq s(i)-1.
\end{equation}
\end{Theorem}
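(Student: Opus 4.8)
The plan is to relate the number $\rho(-i-1)$ of minimal generators of $M = H^1_*(\mathcal{E})$ in degree $-i-1$ to the dimensions $m_l = h^1(\mathcal{E}(l))$ of the graded pieces, which in turn are controlled by the spectrum via item \ref{itemb1} of its definition. The key is the multiplication map $\mu_l : M_1 \otimes M_l \to M_{l+1}$ induced by the $S$-module structure; the number of \emph{new} generators in degree $l+1$ is $\rho(l+1) = m_{l+1} - \dim(\operatorname{Im}\mu_l)$, so bounding $\rho(-i-1)$ amounts to bounding the rank of the relevant multiplication map. The upper bound $\rho(-i-1) \le s(i)-1$ should come from the following observation: by \eqref{eq3'} we have $m_{-i-1} = \sum_{j\ge i} s(j)$, and the difference $m_{-i} - m_{-i-1} = \sum_{j\ge i-1} s(j)$ recorded in the displayed identity before the theorem shows that $M$ is, roughly speaking, growing in every degree up to the top. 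First I would argue that multiplication by a single general linear form $X \in S_1$ gives a map $M_{-i-2} \to M_{-i-1}$ whose cokernel has dimension at most $m_{-i-1} - (m_{-i-2} - \text{(something)})$; since the spectrum forces $m_{-i-2} \ge m_{-i-1}$ strictly (because $s(j)>0$ for all $0\le j \le i$ by property \ref{itema2}), at least one generator of degree $-i-1$ must already be hit, leaving at most $s(i)-1$ genuinely new ones.

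For the lower bound, I would estimate the rank of the full multiplication map $\mu_{-i-2}: S_1 \otimes M_{-i-2} \to M_{-i-1}$ from above, so that $\rho(-i-1) = m_{-i-1} - \operatorname{rank}\mu_{-i-2}$ is bounded below. The essential input here is that the image of $\mu_{-i-2}$ lies inside the subspace of $M_{-i-1}$ spanned by the classes coming from lower-degree generators, and the spectrum bounds how large this can be. Concretely, the quantity $2\sum_{j\ge i+1} s(j)$ appearing in the bound strongly suggests that the codomain contribution is being measured against a rank-2 phenomenon: the relevant estimate should track how the two ``legs'' of the bundle (reflecting $c_1(\mathcal{E}) = -1$ and the restriction to a line $\P^1$, where $\mathcal{H} = \bigoplus \op1(k_i)$) each contribute a copy of $\sum_{j\ge i+1} s(j)$ to the rank. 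I would carry this out by restricting $\mathcal{E}$ to a general line and using the spectrum sheaf $\mathcal{H}$ on $\P^1$, where the analogous multiplication maps are completely explicit, and then transferring the $\P^1$-estimate back to $\P^3$ via the defining properties \ref{itemb1}--(b) of the spectrum.

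The technical heart is the comparison between multiplication on $M = H^1_*(\mathcal{E})$ over $\P^3$ and the corresponding maps for $\mathcal{H}$ over $\P^1$; this is exactly where \cite[Proposition 3.1]{HR91} does its work in the even-determinant case, and the adaptation requires replacing their self-duality $\mathcal{E} \simeq \mathcal{E}^\vee$ with the twisted self-duality $\mathcal{E} \simeq \mathcal{E}^\vee(-1)$ established earlier in the excerpt, which shifts the symmetry \ref{itema1} of the spectrum from $\{k_i\} = \{-k_i\}$ to $\{k_i\} = \{-k_i-1\}$. I would mirror the Hartshorne--Rao argument line by line, checking at each step that the degree shifts induced by the twist by $\op3(-1)$ are correctly accounted for; the factor of $2$ in the lower bound and the $-1$ in the upper bound are precisely the bookkeeping that must be redone.

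The step I expect to be the main obstacle is establishing the upper bound on $\operatorname{rank}\mu_{-i-2}$ (equivalently, the lower bound on $\rho$), since this requires a genuine rank estimate rather than a dimension count: one must show that a sufficiently large piece of $M_{-i-1}$ fails to be in the image of multiplication, and controlling the image of a multiplication map is generally the delicate part of any Castelnuovo-type or spectrum argument. The cleanest route is likely to exploit the self-duality to pair the obstruction in degree $-i-1$ against generators in the dual degree, turning the rank bound into a statement about the generation of $M$ in a symmetric range; making this pairing precise, while tracking the twist by $\op3(-1)$, is where the argument will demand the most care.
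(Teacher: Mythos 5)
Your proposal correctly identifies the general strategy (bound $\rho$ by ranks of multiplication maps, adapt \cite[Proposition 3.1]{HR91}), but it misses the two ideas that actually make the paper's proof work, and both of your substitute arguments fail. The paper's proof restricts $\mathcal{E}$ to a general \emph{plane} $H$, not a line: by Barth's restriction theorem the bundle $\mathcal{E}_H$ is stable (this is the only place where $c_1=-1$ enters --- twisted self-duality plays no role whatsoever), and stability of $\mathcal{E}_H$ forces multiplication by the linear form $x$ defining $H$ to be injective on $M=H^1_*(\mathcal{E})$ in degrees $\le -1$. One then passes to the quotient module $N=M/xM$ over the three-variable ring $S/(x)$; by Nakayama, $M$ and $N$ have the same minimal generators, and injectivity gives $n_{-l}=m_{-l}-m_{-l-1}=\sum_{j\ge l-1}s(j)$. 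Both bounds are then proved on $N$: the upper bound comes from Hartshorne's multiplication theorem \cite[Theorem 5.3]{H80}, which gives the \emph{strict} inequality $\dim\operatorname{im}\bigl(N_{-l-1}\otimes H^0(\mathcal{O}_{\mathbb{P}^2}(1))\to N_{-l}\bigr)>n_{-l-1}$, whence $\rho(-l)\le n_{-l}-n_{-l-1}-1=s(l-1)-1$; the lower bound is the trivial count $\dim\operatorname{im}\eta\le 3n_{-l-1}$, whence $\rho(-l)\ge n_{-l}-3n_{-l-1}=s(l-1)-2\sum_{j\ge l}s(j)$.

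Against this, your proposal has three concrete gaps. First, your upper-bound argument is a pure dimension comparison (``$m_{-i-2}\ge m_{-i-1}$ strictly, so at least one generator is already hit''); besides the inequality being backwards (for $l\le -1$ one has $m_l=h^0(\mathcal{H}(l+1))$, which \emph{decreases} as $l$ decreases), no comparison of dimensions can produce the $-1$: that requires a genuine rank estimate on the multiplication map, which is exactly what \cite[Theorem 5.3]{H80} supplies and what your proposal never identifies. Second, your lower bound is planned on $M$ itself over the four-variable ring; the trivial count there gives only $\rho(-l)\ge m_{-l}-4m_{-l-1}$, which is strictly weaker than \eqref{ineq1} as soon as the spectrum contains integers $>l$. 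The passage to $N=M/xM$ (three variables, smaller graded pieces) is not optional bookkeeping --- it is what makes the count close; accordingly, the factor $2$ in \eqref{ineq1} is $3-1$ (three linear forms on the plane), not the ``rank-2 / two legs'' phenomenon you conjecture. Third, restricting to a general \emph{line} and working with the spectrum sheaf $\mathcal{H}$ cannot succeed: the defining properties of the spectrum are purely numerical identities on $h^1(\mathcal{E}(l))$ and carry no information about the $S$-module structure of $M$, so no estimate on ranks of multiplication maps can be ``transferred back'' from $\mathbb{P}^1$ to $\mathbb{P}^3$ through them.
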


\begin{proof}
Let $H$ be a general plane in $\mathbb{P}^3$ with equation $x=0$. Starting with the restriction sequence 
$$ 0\longrightarrow\mathcal{E}(-1)\longrightarrow\mathcal{E}\longrightarrow\mathcal{E}_H\longrightarrow0, $$
we consider the rank $2$ vector bundle $\mathcal{E}_H$ on $\mathbb{P}^2\simeq H$ and 
$M=H^1_*(\mathcal{E}_H)$ its first cohomology module. Since $c_1(\mathcal{E})=-1$ it follows 
from \cite[Theorem 3]{Bar77} that the vector bundle $\mathcal{E}_H$ is stable and so the morphism $x:M_{-l-1}\longrightarrow M_{-l}$ is injective for $l\geq0$. If we define $N_{-l}$ of dimension $n_{-l}$ to be the quotient of $M_{-l}$ by the image of $M_{-l-1}$ via the map $x$, then $N$ is a graded submodule of $M$ and $m_{-1}=c_2(\mathcal{E}), m_0=c_2(\mathcal{E})-1$ which implies $0<n_{-2}<c_2(\mathcal{E})$ and $n_{-1}<c_2(\mathcal{E})$. For each $l\geq-1$, we take the natural map
$$ \eta \colon N_{-l-1}\otimes H^0(\op2(1))\longrightarrow N_{-l} $$
and from \cite[Theorem 5.3]{H80} the dimension of the image of $\eta$ is $>n_{-l-1}$ hence, the submodule $N_{-l}$ contains at most $n_{-l}-n_{-l-1}-1$ 
minimal generators of degree $-l$. Since $\rho(-l)$ also is the number of generators of 
$N:=\oplus N_{l}$ (as an $S$-module) in degree $-l$ we get
$$ \rho(-l)\leq n_{-l}-n_{-l-1}-1=s(l-1)-1, \mbox{ for } l\geq1. $$  
On the other hand, the dimension of the image of the map $\eta$ is at most equal to $3n_{-l-1}$, 
hence the submodule $N_{-l}$ must contain at least $n_{-l}-3n_{-l-1}$ minimal generators and we can see that
$$n_{-l}-3n_{-l-1}=s(l-1)-2n_{-l-1}=s(l-1)-2(m_{-l-1}-m_{-l-2})=s(l-1)-2\displaystyle\sum_{j\geq l}s(j),$$
hence the left hand side of the inequality \eqref{ineq1} is true for each $l\geq1$.
\end{proof}

In order to apply our tools, let us start by revisiting the cases $c_2=2$ and $c_2=4$; these have already been explored in the literature and can be found at \cite{BM85,HS81,M81}.

\subsection{Minimal Horrocks monads for $\mathbf{c_2=2}$} \label{subsec:c2=2}

Let $Y$ be the disjoint union of two conics in $\mathbb{P}^3$ and consider the stable rank 2 bundle given by exact sequences of the form
$$ 0\to \op3(-2) \to \mathcal{E} \to \mathcal{I}_Y(1) \to 0 , $$
where $\mathcal{I}_Y$ denotes the ideal sheaf for the curve $Y$. One can check that $c_1(\mathcal{E})=-1$, $c_2(\mathcal{E})=2$, and the only possible spectrum is given by $\X(\mathcal{E})=\{-1,0\}$.

By Theorem \ref{teo1}, we get that $\rho(-1)=1$, and the minimal Horrocks monads must be of the form
\begin{equation*}
\op3(-2) \rightarrow
\op3(-b_1-1)\oplus\op3(b_1)\oplus\op3(-b_2-1)\oplus\op3(b_2)
\rightarrow \op3(1),
\end{equation*} 
where the integers $b_1$, $b_2$ satisfy the relation 
$$ 2 - \big( b_1^2 + b_2^2 + b_1 + b_2 \big) = 2 ; $$
that is, $b_1^2 + b_2^2 + b_1 + b_2=0$. Since $b_1,b_2\ge0$, the only solution of this equation is 
$b_1=b_2=0$, and we end up with a minimal Horrocks monad of the form
\begin{equation}\label{c_2=2}
\op3(-2) \stackrel{\beta}{\longrightarrow} 2\cdot\op3(-1)\oplus2\cdot\op3 \stackrel{\alpha}{\longrightarrow} \op3(1),
\end{equation}
which is a monad of Hartshorne type with parameter $s=1$, compare with \eqref{h-monad}.

Every stable rank 2 bundle $\mathcal{E}$ with $c_1(\mathcal{E})=-1$ and $c_2(\mathcal{E})=2$ is the cohomology of a minimal Horrocks monad as in display \eqref{c_2=2}. Hartshorne and Sols showed in \cite{HS81} that the moduli space $\mathcal{B}(-1,2)$ is an irreducible quasi-projective variety of dimension $11$.


\subsection{Minimal Horrocks monads for $\mathbf{c_2=4}$}\label{subsec:c2=4}

In this case, there are the possible
spectra $\X_1^4=\{{-1}^2, 0^2\}$ and $\X_2^4=\{-2, -1, 0, 1\}$, c.f. the Table \ref{table:4}. 

To show that $\X_1^4=\{{-1}^2, 0^2\}$ is realized as the 
spectrum of a stable $2$ bundle $\mathcal{E}$ on $\mathbb{P}^3$ with $c_1(\mathcal{E})=-1$ and $c_2(\mathcal{E})=4$ it is enough we take a curve $Y$ given by the disjoint union of three conics. Applying Theorem \ref{teo1} (in this case $k=0$ and $\rho(-1)=2$) we conclude that the integer $1$ appears twice in the tuple $\boldsymbol{a}$ and so we get a monad of the form
\begin{equation*}
2\cdot\op3(-2)\rightarrow\bigoplus_{j=1}^{3}\op3(b_j)\oplus\op3(-b_j-1)\rightarrow2\cdot\op3(1),
\end{equation*} 
where the integers $b_1, b_2,b_3$ satisfy the equation 
$b_1^2+b_2^2+b_3^2+b_1+b_2+b_3=0$ which has only $b_1=b_2=b_3=0$ as solution. Therefore, we get the following minimal Horrocks monad
\begin{equation}\label{c2=4}
2\cdot\op3(-2)\stackrel{\beta}{\longrightarrow}3\cdot\op3(-1)\oplus3\cdot\op3\stackrel{\alpha}{\longrightarrow}2\cdot\op3(1),
\end{equation} 
which is a monad of Hartshorne type with parameter $s=2$, compare with \eqref{h-monad}.


The set $\X_2^4=\{-2, -1, 0, 1\}$ can also be realized as the spectrum of a stable bundle. Indeed, it is sufficient to consider a curve $Y=Y_1\sqcup Y_2$ where $Y_1$ is a plane curve of degree $4$ and $Y_2$ is a complete intersection of two surfaces of degree $2$ and $3$. Since $\deg(Y)=10$ and $\omega_Y\simeq\op3(1)$, the rank 2 sheaf given by an extension of the form
\begin{equation}
0\rightarrow\op3(-3)\rightarrow\mathcal{E}\rightarrow\mathcal{I}_Y(2)\rightarrow0.
\end{equation}
is locally free and $c_1(\mathcal{E})=-1$ and $c_2(\mathcal{E})=4$. In particular, $H^1(\mathcal{E}(-2))\neq0$ and from the properties of spectrum we conclude that $\X(\mathcal{E})=\X_2$. Using Theorem \ref{teo1} we find $k=1$, $\rho(-2)=1$, and $\rho(-1)=0$; repeating the arguments above we conclude that $\mathcal{E}$ can be obtained as the cohomology of a minimal Horrocks monad of the form
\begin{equation}\label{cc2=4}
\op3(-3)\stackrel{\beta}{\longrightarrow}\op3(-2)\oplus\op3(-1)\oplus\op3\oplus\op3(1)\stackrel{\beta}{\longrightarrow}\op3(2),
\end{equation} 
which is a monad of Ein type with parameters $a=1$, $b_1=0$ and $b_2=1$, compare with \eqref{e-monad}.

We conclude that every stable rank 2 bundle in $\mathcal{B}(-1,4)$ is the cohomology of a minimal Horrocks monad either of the form \eqref{c2=4} or \eqref{cc2=4}. In addition, Banica and Manolache showed in \cite{BM85} that the moduli space $\mathcal{B}(-1,4)$ has two connected components: $\mathcal{B}_1(-1,4)$ parametrizes those bundles with spectrum $\X_1$ and has dimension $27$, while $\mathcal{B}_2(-1,4)$ parametrizes the bundles with spectrum $\X_2$ and has dimension $28$.


\subsection{Possible positive minimal monads for $\mathbf{c_2=6}$}  \label{subsec:c2=6} 

We will now use Theorem \ref{teo1} to determine all possible \emph{positive} minimal Horrocks monads for stable rank 2 bundles $\mathcal{E}$ with $c_1(\mathcal{E})=-1$ and $c_2(\mathcal{E})=6$. 

By repeating the arguments in Sections \ref{subsec:c2=2} and  \ref{subsec:c2=4} we get that the spectra $\mathcal{X}_1^6=\{r_0^3\}$ and 
$\mathcal{X}_4^6=\{r_0r_1r_2\}$ provide, respectively, the following 
possible positive minimal Horrocks monads 
\begin{equation*}
3\cdot\op3(-2)\rightarrow4\cdot\op3(-1)\oplus4\cdot\op3\rightarrow3\cdot\op3(1)
\end{equation*}
and
\begin{equation*}
\op3(-4)\rightarrow\op3(-3)\oplus\op3(-1)\oplus\op3\oplus\op3(2)\rightarrow\op3(3);
\end{equation*}
these are monads of Hartshorne (with parameter $s=4$) and Ein type (with parameters $a=3$, $b_1=0$ and $b_2=2$), respectively.

By applying Theorem \ref{teo1} to the spectrum $\X_2^6=\{r_0^2r_1\}$, it follows that $k=1$, $\rho(-2)=1$ 
and $0\leq\rho(-1)\leq1$.  If $\rho(-1)=0$, then the possible positive minimal Horrocks monad has the form
\begin{equation*}
\op3(-3)\rightarrow\op3(b_1)\oplus\op3(-b_1-1)\oplus\op3(b_2)\oplus\op3(-b_2-1)\rightarrow\op3(2),
\end{equation*}
such that $b_1^2+b_2^2+b_1+b_2=0$, so $b_1=b_2=0$; notice that the latter is an Ein type monad.

When $\rho(-1)=1$, by repeating the above arguments we get the possible positive monad 
\begin{equation*}\label{monad:p3}
\op3(-3)\oplus\op3(-2)\rightarrow\op3(-2)\oplus2\cdot\op3(-1)\oplus2\cdot\op3\oplus\op3(1)\rightarrow\op3(1)\oplus\op3(2).
\end{equation*}

Finally, for the spectrum $\X_3^6=\{r_0r_1^2\}$ we obtain $k=1$, $\rho(-2)=2$ and $\rho(-1)=0$, hence
the monad corresponding has the form
\begin{equation}\label{monad:p5}
2\cdot\op3(-3)\rightarrow\bigoplus_{j=1}^{3}\op3(b_j)\oplus\op3(-b_j-1)\rightarrow2\cdot\op3(2),
\end{equation} 
where the integers $b_1, b_2, b_3$ satisfy the equation $b_1^2+b_2^2+b_3^2+b_1+b_2+b_3=6$ which 
has how solutions $b_1=b_2=b_3=1$ or $b_1=b_2=0$ and $b_3=2$. 

By fixing $\boldsymbol{a}=(a_1\dots a_s):=a_1^{r_1},\cdots, a_k^{r_k}$ and $\boldsymbol{b}=(b_1,\dots,b_{s+1}):=b_1^{t_1},\cdots, b_w^{t_w}$ where $a_i^{r_i}$ and $b_j^{t_j}$ stand for the repetition of $a_i$ and $b_j$ in $\boldsymbol{a}$ and $\boldsymbol{b}$ respectively, the results of this section are summarized in Table \ref{table:c2=6}. We will see below that monads of type (P3) and (P4) with stable cohomology bundles do exist, see Theorem \ref{thm:p3+p4}. On the other hand, there are no stable rank 2 bundles that are the cohomology of a monad of type (P5), compare with Lemma \ref{lema1}. 

\begin{table}[ht]
\begin{center}
\begin{tabular}{ | c | c | c |c| c|} 
\hline
Spectrum & $\boldsymbol{b}$ & $\boldsymbol{a}$ & Label & Homotopy free? \\ 
\hline
$\X_1^6=\{r_0^3\}$ & $0^4$ & $1^3$ & (P1) & yes\\
\hline
$\X_2^6=\{r_0^2r_1\}$ & \makecell{$0^2$ \\ $0^2, 1$} & \makecell{$2$ \\ $1,2$} & \makecell{(P2) \\ (P3)} & \makecell{yes \\ no}\\
\hline
$\X_3^6=\{r_0r_1^2\}$ & \makecell{$1^3$\\\textcolor{red}{$0^2,2$}} & \makecell{$2^2$\\\textcolor{red}{$2^2$}} & \makecell{(P4) \\ \textcolor{red}{(P5)}} & \makecell{yes \\ no}\\
\hline
$\X_4^6=\{r_0r_1r_2\}$ & $0,2$ & $3$ & (P6) & yes\\
\hline
\end{tabular}
\bigskip
\caption{Spectra and possible positive minimal monads for stable rank 2 bundles with $c_1=-1$ and $c_2=6$. The 
columns $\boldsymbol{b}$ and $\boldsymbol{a}$ indicate, respectively, the non-negative degrees in the 
middle term and the degrees in the right-hand term of a minimal monad whose cohomology bundle has the 
given spectrum. Monads of type (P5) do not admit stable cohomology bundles.}
\label{table:c2=6}
\end{center}
\end{table}


\subsection{Possible positive minimal monads for $\mathbf{c_2=8}$}


Following the cases in Table \ref{table:4}, let us now list the possible positive minimal monads when 
$c_2=8$. The first examples are the following  Hartshorne (with parameter $s=5$) and 
Ein (with parameters $a=4$, $b_1=0$ and $b_2=3$) type monads:
\begin{equation*}
4\cdot\op3(-2)\rightarrow5\cdot\op3(-1)\oplus5\cdot\op3\rightarrow4\cdot\op3(1).
\end{equation*}
and
\begin{equation*}
\op3(-5)\rightarrow\op3(-4)\oplus\op3(-1)\oplus\op3\oplus\op3(3)\rightarrow\op3(4).
\end{equation*}
One can check that the cohomology bundles of such positive monads have spectra equal to $\X_1^8=\{r_0^4\}$ and $\X_7^8=\{r_0r_1r_2r_3\}$, respectively. 

For the spectrum $\X^8_2=\{r_0^3r_1\}$ we have $k=1$, and it follows from Theorem \ref{teo1} that $\rho(-2)=1$ and $\rho(-1)\in\{1, 2\}$.
When $\rho(-1)=1$ we must find non-negative integers satisfying the equation $$ b_1^2+b_2^2+b_3^2+b_1+b_2+b_3=0; $$ 
the only solution is $b_1=b_2=b_3=0$. When $\rho(-1)=2$, equation \eqref{eq1} becomes 
$$ b_1^2+b_2^2+b_3^2+b_4^2+b_1+b_2+b_3+b_4=2 $$
whose only non-negative solution is $b_1=b_2=b_3=0$ and $b_4=1$. 

For the next spectrum, namely $\X^8_3=\{r_0^2r_1^2\}$, we observe that $k=1$, hence $\rho(-2)=2$ 
and either $\rho(-1)=0$ or $\rho(-1)=1$. By analyzing the first possibility we get the equation
$$ b_1^2+b_2^2+b_3^2+b_1+b_2+b_3=4 $$ 
whose only non-negative integral solution is $b_1=0$ and $b_2=b_3=1$. In the other case, the equation in display \eqref{eq1} becomes
$$ b_1^2+b_2^2+b_3^2+b_4^2+b_1+b_2+b_3+b_4=6, $$
and it admits two non-negative integral solutions: $(b_1,b_2,b_3,b_4)=(0,1,1,1)$ and \linebreak $(b_1,b_2,b_3,b_4)=(0,0,0,2)$

The spectrum $\X^8_4=\{r_0^2r_1r_2\}$ is such that
$k=2$, $\rho(-3)=1$, $\rho(-2)=0$, and either $\rho(-1)=0$ or $\rho(-1)=1$. If $\rho(-1)=0$, then the equation to be solved is 
$$ b_1^2+b_2^2+b_1+b_2=4; $$
its only non-negative integral solution is $b_1=b_2=1$. Regarding the case $\rho(-1)=1$, the equation in display \eqref{eq1} becomes $$ b_1^2+b_2^2+b_3^2+b_1+b_2+b_3=6; $$
the latter admits two non-negative integral solutions: $b_1=b_2=b_3=1$ and $b_1=b_2=0$ and $b_3=2$.

Next, considering the spectrum  $\X^8_5=\{r_0r_1^2r_2\}$, then $k=2$, and $\rho(-3)=1$; Theorem \ref{teo1} implies that $\rho(-1)=0$ and either $\rho(-2)=0$ or $\rho(-2)=1$. When $\rho(-2)=0$, one can check that the integers $b_1=b_2=1$ satisfy the equation in display \eqref{eq1}; when $\rho(-2)=1$, then \eqref{eq1} has the 
form
$$ b_1^2+b_2^2+b_3^2+b_1+b_2+b_3=10 $$
and its only non-negative integral solution is $b_1=b_2=1, b_3=2$.

Finally, for the last spectrum $\X^8_6=\{r_0r_1^3\}$ we find $k=1$ with $\rho(-2)=3$ and $\rho(-1)=0$, hence the equation in display
\eqref{eq1} has only one non-negative integral solution, given by $b_1=0$, $b_2=b_3=1$ and $b_4=2$.

We collect all of the information obtained above in Table \ref{table:c2=8}. In Section \ref{section4}, we will show that 
the monads marked in red do not admit stable rank 2 bundles as cohomology.



%
 

	\begin{table}[ht]
		\begin{center}
	\begin{tabular}{ | l | c | c | c |c| } 
\hline
Spectrum & $\boldsymbol{b}$ &  $\boldsymbol{a}$ & Label & Homotopy free?\\ 
\hline
$\X^8_1=\{r_0^4\}$&$0^5$ &$1^4$ &  (P7)& yes\\
\hline
$\X^8_2=\{r_0^3r_1\}$& \makecell{$0^3$\\ $0^3, 1$}& \makecell{$1, 2$\\ $1^2,2$} & \makecell{(P8)\\ (P9)}& \makecell{yes\\ no}\\
\hline
$\X^8_3=\{r_0^2r_1^2\}$& \makecell{$0, 1^2$\\ \textcolor{red}{$0^3, 2$}\\ $0, 1^3$}&  \makecell{$2^2$\\ \textcolor{red}{$1, 2^2$}\\ $1, 2^2$}& \makecell{(P10)\\ \textcolor{red}{(P11)}\\ (P12)}& \makecell{yes\\no\\no }\\
\hline
$\X^8_4=\{r_0^2r_1r_2\}$ &\makecell{$1^2$\\$0^2, 2$\\ \textcolor{red}{$1^3$}} & \makecell{$3$\\ $1, 3$\\ \textcolor{red}{$1, 3$}}& \makecell{(P13)\\ (P14)\\ \textcolor{red}{(P15)}}& \makecell{yes\\no \\no}\\
\hline
$\X^8_5=\{r_0r_1^2r_2\}$&\makecell{$1^2$\\ $1^2, 2$}& \makecell{$3$\\ $2, 3$} & \makecell{(P16)\\ (P17)}&\makecell{yes\\ no}\\
\hline
 $\X^8_6=\{r_0r_1^3\}$&\textcolor{red}{$0, 1^2, 2$} &\textcolor{red}{$2^3$} & \textcolor{red}{(P18)}&no \\
\hline
$\X^8_7=\{r_0r_1r_2r_3\}$ &$0, 3$ &$4$ &(P19)& yes\\
\hline
\end{tabular}
\bigskip
\caption{Spectra and possible positive minimal monads for stable rank 2 bundles with $c_1=-1$ and $c_2=8$. The columns $\boldsymbol{b}$ and $\boldsymbol{a}$ indicate, respectively, the non-negative degrees in the middle term and the degrees in the right-hand term of a minimal monad whose cohomology bundle has the given spectrum. Monads of type (P11) and (P18) do not admit stable cohomology bundles while monads of type (P15) do not exist.}
\label{table:c2=8}
\end{center}
\end{table}
\newpage


\section{Non-negative minimal monads for $c_2\le8$}

\subsection{Non-existence of negative minimal monads}

We will prove in this subsection that there is no stable rank $2$ bundle $\mathcal{E}$ with $c_1(\mathcal{E})=-1$ and $c_2(\mathcal{E})\leq8$ which is the 
cohomology of a negative minimal Horrocks monad of the form \eqref{eq6}. In order to attain this goal we need, besides Theorem \ref{teo1}, the following proposition. 

\begin{prop}\label{propa2}
Consider a minimal Horrocks monad as in display \eqref{eq6}. 
If $\mathcal{A}=\oplus_{i=1}^{s}\op3(a_i)$ has $r$ summands with degrees $\leq l$, then $\mathcal{B}=\bigoplus_{j=1}^{s+1}\Big(\op3(b_j)\oplus\op3(-b_j-1)\Big)$ must contains at least $r+3$ summands with degrees $\geq-l$.
\end{prop}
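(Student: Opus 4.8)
The plan is to exploit the surjectivity of $\alpha$ together with the self-duality $\mathcal{B}\cong\mathcal{B}^\vee(-1)$, reducing the statement to one about minimal bundle surjections out of a sum of line bundles. Write $\mathcal{A}=\mathcal{A}_{\le l}\oplus\mathcal{A}_{>l}$, where $\mathcal{A}_{\le l}=\bigoplus_{a_i\le l}\op3(a_i)$ has rank $r$, and let $\bar\alpha\colon\mathcal{B}\to\mathcal{A}_{\le l}$ be the composition of $\alpha$ with the projection; it is surjective because $\alpha$ is. By minimality every nonzero entry of $\alpha$, hence of $\bar\alpha$, has strictly positive degree, so a summand $\op3(d)$ of $\mathcal{B}$ maps nontrivially into a summand $\op3(a_i)$ of $\mathcal{A}_{\le l}$ only if $d<a_i\le l$, that is $d\le l-1$. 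Thus $\bar\alpha$ factors through the subbundle $\mathcal{B}_{\le l-1}$ spanned by the summands of degree $\le l-1$, yielding a surjection $\mathcal{B}_{\le l-1}\twoheadrightarrow\mathcal{A}_{\le l}$ of bundles on $\P^3$.

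Next I would convert the desired count into a statement about $\mathcal{B}_{\le l-1}$. The isomorphism $\mathcal{B}\cong\mathcal{B}^\vee(-1)$ pairs the summands of $\mathcal{B}$ by $\op3(d)\leftrightarrow\op3(-d-1)$, and this involution restricts to a bijection between the summands of degree $\ge -l$ and those of degree $\le l-1$. Hence the number of summands of $\mathcal{B}$ of degree $\ge -l$ equals $p:=\rank\mathcal{B}_{\le l-1}$, and it suffices to prove $p\ge r+3$; equivalently, that the kernel of $\mathcal{B}_{\le l-1}\twoheadrightarrow\mathcal{A}_{\le l}$ has rank at least $3$. The bound is only substantive when $\mathcal{A}_{\le l}\neq 0$, i.e. $r\ge 1$, which is the case I treat.

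The heart of the argument is the claim that a \emph{minimal} bundle surjection $\Phi\colon\mathcal{F}\twoheadrightarrow\mathcal{G}$ on $\P^3$, from a sum of line bundles onto a nonzero sum $\mathcal{G}=\bigoplus_{t=1}^{r}\op3(c_t)$ of line bundles, forces $\rank\mathcal{F}\ge r+3$. When $r=1$ this is clean: $\Phi$ is given by forms of positive degree (minimality) with no common zero in $\P^3$ (fibrewise surjectivity onto a line bundle), and by Krull's height theorem fewer than four hypersurfaces cannot have empty intersection in $\P^3$, so at least four summands occur. For general $r$ I would pass to the degeneracy locus $D_{r-1}(\Phi)=\{x:\operatorname{rank}\Phi_x\le r-1\}$, which is empty because $\Phi$ is everywhere surjective. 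By the Thom--Porteous formula its class in $H^\ast(\P^3)$ equals $c_{\rank\mathcal{F}-r+1}(\mathcal{G}-\mathcal{F})$, so this Chern class must vanish whenever $\rank\mathcal{F}-r+1\le 3$. Applying this with $\mathcal{F}=\mathcal{B}_{\le l-1}$, $\mathcal{G}=\mathcal{A}_{\le l}$, and noting $p\ge r$ from fibrewise surjectivity, the desired inequality $p\ge r+3$ follows once one knows that $c_k(\mathcal{G}-\mathcal{F})\neq 0$ for $1\le k\le 3$.

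The main obstacle is exactly this last nonvanishing. It amounts to showing that the series $c(\mathcal{G}-\mathcal{F})=\prod_{t}(1+c_t h)\big/\prod_{j}(1+d_j h)$ has nonzero coefficients in degrees $\le 3$, and this is delicate precisely because in a negative monad the degrees $c_t,d_j$ may be negative, so no naive positivity is available. My strategy would be to extract positivity from minimality together with surjectivity: every $c_t$ must be hit through a summand of strictly smaller degree, which forces enough low-degree summands in $\mathcal{F}$ to make $\mathcal{F}^\vee\otimes\mathcal{G}$ sufficiently positive (ideally ample), so that $D_{r-1}(\Phi)$ would be non-empty in the expected codimension $\le 3$, contradicting surjectivity. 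A cleaner alternative I would try first is to peel off the top-degree summand $\op3(\max_t c_t)$ of $\mathcal{G}$, apply the corank-$1$ count, and induct on $r$ while tracking that the summands of $\mathcal{F}$ serving different targets cannot all coincide; the subtle point there is to avoid the off-by-one loss produced by a naive induction or a naive hyperplane restriction, which is exactly the place where the dimension $3$ of $\P^3$ has to be used.
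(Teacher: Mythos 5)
Your two reduction steps are correct: factoring $\bar\alpha$ through $\mathcal{B}_{\le l-1}$ via minimality, and using the self-duality pairing $\op3(d)\leftrightarrow\op3(-d-1)$ to identify the number of summands of degree $\ge -l$ with $p:=\rank\mathcal{B}_{\le l-1}$, are both valid, and they are precisely the mirror image of the paper's argument (the paper works on the $\beta$ side: the $r$ summands of $\mathcal{A}^\vee(-1)$ of degree $\ge -l-1$ embed fibrewise into $\mathcal{B}_{\ge -l}$, and the quotient is asserted to have rank $\ge 3$). But the entire weight of the proposition rests on the claim that a minimal fibrewise surjection $\Phi\colon\mathcal{F}\twoheadrightarrow\mathcal{G}$ of sums of line bundles on $\P^3$ has corank at least $3$, and you only prove this when $\rank\mathcal{G}=1$. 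For general $r$ you offer two strategies and explicitly concede that the decisive step (the "nonvanishing") is not established; that is a genuine gap, located exactly at the one assertion the paper's own terse proof also leans on.

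Worse, the Thom--Porteous route cannot work in the form you state it. If $\Phi$ is surjective, its kernel $K$ is a bundle of rank $p-r$, and the corank-one Porteous class is (up to sign and Schur-determinant bookkeeping) $c_{p-r+1}(K)$, which vanishes \emph{automatically} because it sits above the rank of $K$; so emptiness of $D_{r-1}(\Phi)$ produces no contradiction by itself. The nonvanishing you would need is a purely numerical statement about the degrees, and it is false without minimality --- the split surjection $\mathcal{G}\oplus\op3(1)\twoheadrightarrow\mathcal{G}$ has $p=r+1$ and vanishing class --- so minimality would have to enter the Chern computation in a way you never pin down. The clean repair is elementary and uniform in $r$: by minimality every entry of the $r\times p$ matrix of $\Phi$ is a form of positive degree, hence every $r\times r$ minor is a form of positive degree (or zero); fibrewise surjectivity means these minors have no common zero in $\P^3$, so they generate an ideal of height $4$ in $k[x_0,\dots,x_3]$; but by the Eagon--Northcott bound the ideal of maximal minors of an $r\times p$ matrix, when proper, has height at most $p-r+1$. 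Hence $p-r+1\ge 4$, i.e. $p\ge r+3$, which closes your argument (and, dualized to fibrewise injections, is exactly the fact the paper invokes when it says "the quotient of this embedding has rank at least $3$"). Note this is also just the natural generalization of your own $r=1$ Krull-height argument, rather than a new idea.
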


\begin{proof}
The vector bundle $\mathcal{A}^\vee(-1)$ contains $r$ summands with degrees $\geq -l-1$ and by the 
minimality of the monad these ones can be embedded in a summand of $\mathcal{B}$ whose terms have 
degrees $\geq-l$. The quotient of this embedding has rank at least $3$.   
\end{proof}
It is important to observe that for any $c_2$ the Proposition \ref{propa2} implies that there is not negative 
monad whose terms on the right-hand side are all of non-positive degree. This means that a negative minimal 
monad is obtained from a positive minimal monad by adding terms of negative degree.
\begin{Theorem}\label{negative}
Let $\EE$ be a stable $2$ vector bundle on $\mathbb{P}^3$ with $c_1=-1$ and $c_2\leq8$. There are no 
negative minimal Horrocks monads having $\EE$ as its cohomology bundle.
\end{Theorem}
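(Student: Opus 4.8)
The plan is to show that any hypothetical negative minimal monad for a stable bundle with $c_2\le 8$ would force the right-hand term $\mathcal{A}$ to acquire negative-degree summands in a way that contradicts either the spectrum constraints of Theorem \ref{teo1} or the structural bound of Proposition \ref{propa2}. The central observation, already recorded in the remark following Proposition \ref{propa2}, is that a negative monad cannot have \emph{all} right-hand summands of non-positive degree; hence a negative monad must be built from one of the positive monads listed in Table \ref{table:c2=6} or Table \ref{table:c2=8} by appending summands $\op3(a_i)$ with $a_i<0$ to $\mathcal{A}$ (together with the dual data forced by the self-duality $\mathcal{B}\simeq\mathcal{B}^\vee(-1)$).

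First I would fix the spectrum $\X$ from Table \ref{table:4} and recall that the largest integer $k$ appearing in $\X$ determines the top-degree summand $\op3(-k-1)$ of $\mathcal{A}$, with $s(k)=\rho(-k-1)$ copies by \eqref{eq3'}; this part of $\mathcal{A}$ is fixed and of negative degree, so the genuinely new phenomenon of a \emph{negative} monad is the appearance of a summand $\op3(a)$ with $a<0$ that is \emph{not} of this top type, i.e. a minimal generator of $M=H^1_*(\E)$ in a degree $-a$ strictly smaller than $k+1$. Concretely, a negative right-hand summand $\op3(a)$ with $a<0$ corresponds to a minimal generator of $M$ in degree $-a\in\{1,\dots,k\}$, that is to $\rho(-i-1)>0$ for some index producing a negative $a_i$. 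I would then run through each spectrum in the range $c_2\le 8$ and apply the inequality \eqref{ineq1} of Theorem \ref{teo1}: in every case the permitted range for $\rho(-i-1)$ with $0\le i<k$ already pins down the generators in those degrees, and one checks directly that the admissible values never introduce a generator forcing a negative $a_i$ beyond what the positive/non-negative tables allow.

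For the cases where the numerical inequality \eqref{ineq1} alone does not immediately rule out an extra negative summand, I would invoke Proposition \ref{propa2} as the quantitative obstruction: adding $r$ summands of degree $\le l$ (with $l<0$) to $\mathcal{A}$ forces at least $r+3$ summands of $\mathcal{B}$ of degree $\ge -l>0$. Since $\mathcal{B}$ has exactly $2s+2$ summands and $c_1(\mathcal{B})=s+1$, and since $\mathcal{B}$ is self-dual in the sense $\mathcal{B}\simeq\mathcal{B}^\vee(-1)$, the high-degree summands are balanced against the low-degree ones; I would combine the Chern class relation \eqref{eq1}, namely $c_2(\E)=\sum a_i(a_i+1)-\sum b_j(b_j+1)$, with the count from Proposition \ref{propa2} to show that any negative summand inflates $\sum a_i(a_i+1)$ and $\sum b_j(b_j+1)$ beyond what the fixed value $c_2\le 8$ can accommodate while keeping $\E$ stable via Lemma \ref{no syz deg 0}.

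The main obstacle I anticipate is bookkeeping rather than conceptual: one must verify that, for each of the finitely many spectra in Table \ref{table:4} with $c_2\le 8$, no choice of a negative degree $a<0$ in $\mathcal{A}$ is compatible simultaneously with \eqref{ineq1}, with the self-duality forcing the shape \eqref{eq6}, and with the Chern class identity \eqref{eq1}. The delicate point is that a negative summand $\op3(a)$ of $\mathcal{A}$ comes paired, under $\mathcal{E}\simeq\mathcal{E}^\vee(-1)$, with a summand $\op3(-a-1)$ of positive degree on the \emph{left}-hand term $\mathcal{C}$; I would need to argue that minimality of the monad (no direct summand of $\mathcal{C}$ mapping onto a summand of $\mathcal{B}$) together with the generator count from the spectrum rules this out, so that the careful step is checking the minimality and stability conditions survive no negative case. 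Once the finite check is organized by increasing $c_2$, each individual verification reduces to the same two inputs, Theorem \ref{teo1} and Proposition \ref{propa2}, applied to the explicit tuples $\boldsymbol{a}$ and $\boldsymbol{b}$.
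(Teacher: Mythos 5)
Your opening reduction coincides with the paper's first step: by the remark following Proposition \ref{propa2}, a negative minimal monad cannot have all right-hand summands of non-positive degree, so it must be obtained from one of the positive monads in Tables \ref{table:c2=6} and \ref{table:c2=8} by appending summands $\op3(t)$ with $t\le-1$; and Proposition \ref{propa2}, applied with $l=-1$ and $r=1$, cuts the candidates down to (P4), (P12), (P15), (P17), (P18), the only rows of the tables whose middle terms carry enough summands of degree $\ge1$. The gap is in how you propose to finish from there, and it is genuine. First, Theorem \ref{teo1} cannot ``pin down'' or exclude negative summands: the inequality \eqref{ineq1} bounds $\rho(-i-1)$ for $0\le i<k$, i.e.\ the number of minimal generators of $H^1_*(\mathcal{E})$ in \emph{negative} degrees, and these govern only the summands of $\mathcal{A}$ of \emph{positive} degree; a summand $\op3(t)$ with $t\le-1$ corresponds to a generator in the positive degree $-t\ge1$, about which Theorem \ref{teo1} (indeed the whole spectrum formalism) is silent — your identification of such a summand with ``$\rho(-i-1)>0$'' conflates the two. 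Second, the Chern-class ``inflation'' you invoke does not occur: appending $\op3(t)$ to $\mathcal{A}$ forces, by rank and self-duality, exactly one extra pair $\op3(b)\oplus\op3(-b-1)$ in $\mathcal{B}$, and equation \eqref{eq1} then reads $b(b+1)=t(t+1)$, solved by $b=-t-1$; thus $c_2$ is \emph{unchanged}, and for the five surviving candidates all the numerical constraints you list (spectrum bounds, Proposition \ref{propa2}, equation \eqref{eq1}, self-duality) are simultaneously satisfiable. No bookkeeping over the finitely many spectra can eliminate these cases.

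What the paper does at this point is geometric rather than numerical, and this is the idea missing from your outline. For (P4), (P12), (P15), (P17), consider the row of $\alpha$ corresponding to the new summand $\op3(t)$: entries coming from summands $\op3(b_j)$ with $b_j\ge0$ have negative degree and vanish, degree-zero entries vanish by minimality, and what survives are at most three homogeneous entries $p,q,s$ (coming from summands of the form $\op3(-b_j-1)$). Since three hypersurfaces in $\mathbb{P}^3$ always have a common point, at such a point the whole row vanishes and $\rank\alpha(p)<\rank\mathcal{A}$, contradicting the surjectivity of $\alpha$. For (P18), minimality instead produces a zero column in $\alpha$, i.e.\ a syzygy of positive degree, which contradicts stability by Lemma \ref{no syz deg 0}. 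Some pointwise argument of this kind, exploiting surjectivity of $\alpha$ as a bundle map or stability through syzygies, is indispensable to close the five critical cases, and your proposal does not contain it.
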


\begin{proof}
If we apply the Proposition \ref{propa2} with $l=-1$ and $r=1$, then it follows from Table \ref{table:c2=6} and Table \ref{table:c2=8} (obtained of the Theorem \ref{teo1}) that a possible negative minimal monad can be obtained only when we add a summand $\op3(t), t\leq-1$, to the right-hand term $\mathcal{A}$ of one of the monads: (P4), (P12), (P15), (P17) or (P18). By analyzing the equation \eqref{eq1} when we add the summand $\mathcal{O}(t)$ to the bundle $\mathcal{A}$ of one of the first four possible monads above, automatically we add a vector bundle $\mathcal{O}(b)\oplus\mathcal{O}(-b-1)$ to the middle term and we get the equality 
$$b(b+1)=t(t+1),$$
which has as solution $b=-t-1$. According to the notation in \eqref{eq6} and from minimality 
of the monad we observe that the map $\alpha$ has a row with just three entries nonzero, more precisely, if we 
add $\op3(t)$ to (P4), (P15) or (P17) (and also for the monad (12) but its matrix has order $4\times10$) then we obtain 
$$\alpha=\left(\begin{array}{cccccccc}
0&0&0&0&0&p&q&s\\
\ast&\ast&\ast&\ast&\ast&\ast&\ast&\ast\\
\ast&\ast&\ast&\ast&\ast&\ast&\ast&\ast\\
\end{array}\right),$$
where $p,q, s$ are homogeneous polynomials of $k[x,y,z,w]$. If we take a point $p\in \mathcal{V}(p)\cap \mathcal{V}(q)\cap \mathcal{V}(s)$, then the 
$\rank \alpha(p)<3$ and so the morphism $\alpha$ is not surjective which is a contradiction. If we add a summand $\mathcal{O}(t)$ to the righ-hand term of the possible monad (18), then the matrix of $\alpha$ has a zero column and so it is not surjective.
\end{proof}

On the other hand, we can to show that there is always stable $2$ vector bundle on $\mathbb{P}^3$ with $c_2\geq10$ 
which is given as cohomology of a negative minimal Horrocks monad. For this, let's consider $X$ be a locally complete 
intersection curve in $\mathbb{P}^3$ of degree $d$. By assuming that $X$ is union of irreducible 
non-singular curves meeting quasi-transversely in any number of point, from the 
\cite[Proposition 2.8]{HR91}, we can consider a nowhere vanishing section of the sheaf 
$\mathcal{N}_X\otimes\omega_X(1)$ and let $Y$ be the multiplicity $2$ scheme associated to the exact sequence
\begin{equation}\label{exact1}
0\longrightarrow\mathcal{I}_Y\longrightarrow\mathcal{I}_X\longrightarrow\omega_X(1)\longrightarrow0.
\end{equation}
By Ferrand's Theorem c.f. \cite[Theorem 1.5]{H78} we get $\omega_Y=\mathcal{O}_Y(-1)$ and so follows that the 
curve $Y$ is the scheme of zeros of a section $s$ of a vector bundle $\mathcal{E}(2)$ of rank 
$2$ on $\mathbb{P}^3$ where $s$ induces the exact sequence
\begin{equation}\label{exact2}
0\longrightarrow\op3(-2)\longrightarrow\mathcal{E}\longrightarrow\mathcal{I}_Y(1)\longrightarrow0.
\end{equation}
We observe that the vector bundle $\mathcal{E}$ obtained in this manner is such that 
$c_1(\mathcal{E})=-1$ and $c_2(\mathcal{E})=2d-2$ and furthermore it is stable, since $Y$ 
has degree even and so $Y$ cannot be in a plane.

\begin{prop}\label{degree0}
Let $X$ be a curve of type $(a,a+t)$ on a non-singular quadric $Q$ with $a, t\geq2$. The
stable vector bundle $\mathcal{E}$ obtained of $X$ as above has cohomology module $H^1_*(\mathcal{E})$ with 
$t-1$ generators in degree $a-1$ this is $\EE$ is given as cohomology of a 
negative minimal Horrocks monad. 
\end{prop}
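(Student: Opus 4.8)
The plan is to read off the minimal generators of $H^1_*(\mathcal{E})$ from the Rao module of $X$, using the two short exact sequences \eqref{exact1} and \eqref{exact2} that produce $Y$ and $\mathcal{E}$; since $c_2(\mathcal{E})=2\deg X-2=4a+2t-2\ge 10$ this also realizes the examples promised just before the statement. First I would twist \eqref{exact2} by $l$ and use $H^1(\op3(l-2))=H^2(\op3(l-2))=0$ to get $H^1(\mathcal{E}(l))\cong H^1(\mathcal{I}_Y(l+1))$, so that $H^1_*(\mathcal{E})$ and $H^1_*(\mathcal{I}_Y)$ coincide as graded modules up to a shift by one; in particular a minimal generator of $H^1_*(\mathcal{E})$ in degree $a-1$ is the same datum as a minimal generator of $H^1_*(\mathcal{I}_Y)$ in degree $a$. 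Because $a-1\ge 1$, each such generator forces a summand $\op3(1-a)$ of strictly negative degree in the right-hand term of the minimal Horrocks monad \eqref{m00}, which by Definition \ref{+monads} makes the monad negative. Thus it suffices to show $H^1_*(\mathcal{I}_Y)$ has exactly $t-1$ minimal generators in degree $a$.

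Next I would compute the Rao module $R:=H^1_*(\mathcal{I}_X)$ by restricting to the quadric. With $\op3(1)|_Q=\mathcal{O}_Q(1,1)$ and $\mathcal{I}_{X/Q}=\mathcal{O}_Q(-a,-a-t)$, the sequence $0\to\mathcal{O}_Q(m-a,m-a-t)\to\mathcal{O}_Q(m,m)\to\mathcal{O}_X(m)\to 0$ together with $H^1(\mathcal{O}_Q(m,m))=0$ for $m\ge 0$ gives $H^1(\mathcal{I}_X(m))\cong H^1(\mathcal{O}_Q(m-a,m-a-t))$. By the Künneth formula this is nonzero exactly for $a\le m\le a+t-2$, and in the bottom degree $m=a$ it is $H^1(\mathcal{O}_Q(0,-t))$, of dimension $t-1$. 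Since $R_{a-1}=0$, these $t-1$ classes are all minimal generators of $R$.

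Then I would compare $\mathcal{I}_Y$ with $\mathcal{I}_X$ through \eqref{exact1}. Adjunction on $Q$ gives $\omega_X(1)=\mathcal{O}_X(a-1,a+t-1)$, and Serre duality on $X$ gives $H^1(\omega_X(m+1))=H^0(\mathcal{O}_X(-(m+1)))^\vee=0$ for $m\ge 0$. Hence the cohomology sequence of \eqref{exact1} yields, in non-negative degrees, a short exact sequence of graded modules $0\to C\to H^1_*(\mathcal{I}_Y)\to R\to 0$, where $C$ is the cokernel of $H^0_*(\mathcal{I}_X)\to H^0_*(\omega_X(1))$ and is in particular a graded quotient of $H^0_*(\omega_X(1))$. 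Applying $-\otimes_S k$ and using right-exactness (Nakayama), together with $R_{a-1}=0$ so that $(R\otimes_S k)_a=R_a$ has dimension $t-1$, one sees that $H^1_*(\mathcal{I}_Y)$ will have exactly $t-1$ generators in degree $a$ as soon as $C$ has no minimal generator in degree $a$.

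The crux, and the step I expect to be hardest, is therefore to show that $C$ is generated in degrees $\le a-1$; being a quotient of $H^0_*(\omega_X(1))$, it is enough that the multiplication $H^0(\op3(1))\otimes H^0(\omega_X(a))\to H^0(\omega_X(a+1))$ be surjective. I would prove this on the quadric: the restriction $H^0(\op3(1))\to H^0(\mathcal{O}_Q(1,1))$ is an isomorphism, multiplication by $H^0(\mathcal{O}_Q(1,1))$ surjects onto $H^0(\mathcal{O}_Q(p+1,q+1))$ for $p,q\ge 0$ by projective normality of $\mathbb{P}^1$ via Künneth, and the restrictions $H^0(\mathcal{O}_Q(p,q))\to H^0(\mathcal{O}_X(p,q))$ for the two relevant twists are surjective because the obstructions $H^1(\mathcal{O}_Q(a-2,a-2))$ and $H^1(\mathcal{O}_Q(a-1,a-1))$ vanish for $a\ge 2$; a diagram chase on the resulting commutative square gives the surjectivity. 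Combining this with the previous steps shows that $H^1_*(\mathcal{E})$ has exactly $t-1$ minimal generators in degree $a-1>0$, whence $\mathcal{E}$ is the cohomology of a negative minimal Horrocks monad.
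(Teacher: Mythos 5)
Your proposal is correct, and its skeleton is the same as the paper's: compute the Rao module $R=H^1_*(\mathcal{I}_X)$ by restricting to $Q$, then pass to $\mathcal{I}_Y$ via \eqref{exact1} and to $\mathcal{E}$ via \eqref{exact2}. But your treatment of the generators in degree $a$ is a genuine addition, and it lands exactly on the point where the paper's own proof is deficient. The paper deduces from the exact sequence
\begin{equation*}
H^0(\omega_X(a+1))\longrightarrow H^1(\mathcal{E}(a-1))\longrightarrow H^1(\mathcal{I}_X(a))\longrightarrow 0
\end{equation*}
that $h^1(\mathcal{E}(a-1))=t-1$. That does not follow: equality would require the connecting map to vanish, i.e. $H^0(\mathcal{I}_X(a))\to H^0(\omega_X(a+1))$ to be surjective, and this fails badly, since $h^0(\mathcal{I}_X(a))=\binom{a+1}{3}$ (every degree-$a$ surface through $X$ is a multiple of $Q$) while $h^0(\omega_X(a+1))=3a^2+2at$; for $a=t=2$ this gives $h^1(\mathcal{E}(a-1))\ge 20$, not $1$. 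What is true --- and is what the proposition actually asserts --- is the count of \emph{minimal generators}, and your argument establishes it: in non-negative degrees the extra cohomology sits in the submodule $C=\coker\big(H^0_*(\mathcal{I}_X)\to H^0_*(\omega_X(1))\big)$ of $H^1_*(\mathcal{I}_Y)$, and your multiplication-surjectivity $H^0(\op3(1))\otimes H^0(\omega_X(a))\twoheadrightarrow H^0(\omega_X(a+1))$, proved on $Q$ by K\"unneth together with the vanishing of $H^1(\mathcal{O}_Q(a-2,a-2))$ and $H^1(\mathcal{O}_Q(a-1,a-1))$, shows $C_a=S_1C_{a-1}$, so $C$ contributes no minimal generators in degree $a$ and the count equals $\dim R_a=t-1$.

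Two small remarks. First, the sequence $0\to C\to H^1_*(\mathcal{I}_Y)\to R\to 0$ is exact only in non-negative degrees, so applying $-\otimes_S k$ to it globally is not quite licit; but writing $M=H^1_*(\mathcal{I}_Y)$, the generator count in degree $a$ only involves $(\mathfrak{m}M)_a=S_1M_{a-1}$, i.e. the degrees $a-1,a\ge 1$, where your sequence is exact, so the degreewise Nakayama argument goes through. Second, if one only wants the qualitative conclusion that the monad is negative, your hard step is not needed: since $S_1\cdot H^1(\mathcal{I}_Y(a-1))=S_1\cdot C_{a-1}\subseteq C_a$, the surjection $M\to R$ together with $R_{a-1}=0$ already forces at least $t-1\ge1$ minimal generators in degree $a$, hence a summand $\op3(1-a)$ of negative degree in the right-hand term of the minimal monad. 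The full strength of your multiplication lemma is needed only for the exact value $t-1$ claimed in the statement.
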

\begin{proof}
From the exact sequence
\begin{equation*}
0\longrightarrow\mathcal{O}_Q(-a, -a-t)\longrightarrow\mathcal{O}_Q\longrightarrow\mathcal{O}_X\longrightarrow0,
\end{equation*}
with $\mathcal{I}_X=\mathcal{O}_Q(-a, -a-t)$ we see that the smallest integer $n$ such that $h^1(\mathcal{I}_X(n))\neq0$ is $n=a$ and in this case c.f \cite[Exercice 5.6, 231]{H77}
$$h^1(\mathcal{I}_X(a))=h^1(Q,\mathcal{O}_Q(0,-t))=t-1.$$
Now, from exact sequence \eqref{exact2} we get $H^1(\mathcal{E}(a-1))\simeq H^1(\mathcal{I}_Y(a))$ and 
replacing this one on the exact sequence \eqref{exact1} we obtain
\begin{equation*}
 H^0(\omega_X(a+1))\longrightarrow H^1(\mathcal{E}(a-1))\longrightarrow H^1(\mathcal{I}_X(a))\longrightarrow0,
\end{equation*}
which implies $h^1(\mathcal{E}(a-1))=t-1$.
\end{proof}


\subsection{Possible non-negative minimal Horrocks monads}

We will list all possible non-negative minimal monads whose cohomology is a stable vector bundle on $\mathbb{P}^3$ with 
$c_1=-1$ and $c_2=6,8$. Combining the Theorem \ref{teo1} (or the Tables \ref{table:c2=6} and \ref{table:c2=8}) with the Proposition \ref{propa2} we get the following possible non-negative minimal monads
\begin{table}[ht]
	\begin{center}
		\begin{tabular}{| c | c | c | c|} 
			\hline
			Spectrum& $\boldsymbol{b}$& $\boldsymbol{a}$& Label\\ 
			\hline\hline
		     $\mathcal{X}_2^6$ & $0^3, 1$& $0,1,2$&(N1)\\
			\hline
			$\mathcal{X}_3^6$ &\makecell{$0, 1^3$\\{\color{red}$0^3, 2$}}&\makecell{$0, 2^2$\\{\color{red}$0, 2^2$}}& \makecell{(N2)\\{\color{red}(N3)}}\\
			\hline\hline
		    $\mathcal{X}_2^8$&  \makecell{$0^4$\\ $0^4, 1$}&\makecell{$0, 1, 2$\\ $0, 1^2,2$}& \makecell{(N4)\\(N5)}\\
			\hline
			 $\mathcal{X}_3^8$ &\makecell{$0^2, 1^2$\\{\color{red} $0^4, 2$}\\ $0^2, 1^3$}&\makecell{$0, 2^2$\\ {\color{red}$0, 1, 2^2$}\\ $0, 1, 2^2$}& \makecell{(N6)\\{\color{red}(N7)}\\(P8)}\\
			\hline
			$\mathcal{X}_4^8$ & \makecell{$0^3, 2$\\ $0, 1^3$}&\makecell{$0,1, 3$\\ $0, 1, 3$}& \makecell{(N9)\\(N10)} \\
			\hline
			$\mathcal{X}_5^8$&$0, 1^2, 2$& $0, 2, 3$& (N11)\\
			\hline
			 $\mathcal{X}_6^8$&\makecell{{\color{red}$0^2, 1^2, 2$}\\ {\color{red}$1^5$}} & \makecell{{\color{red}$0, 2^3$}\\{\color{red}$0, 2^3$}}&\makecell{{\color{red}(N12)}\\{\color{red}(N13)}}\\
			\hline
		\end{tabular}
		\bigskip
		\caption{Possible non-negative minimal monads for stable rank 2 bundles with $c_1=-1$ and $c_2=8$. The monads marked in red do not admit stable cohomology bundles (if they exist). None of the monads listed in the table is homotopy free.}
		\label{table:5}
	\end{center}
\end{table}
\begin{Remark}\label{obs1}
If a minimal monad of the type labeled (N3), (N7) or (N12) in Table \ref{table:5} exists, then the matrix of the morphism $\alpha$ which form the monad contains a column of zeros and therefore this morphism is not surjective, leading to a contradiction. Therefore the possibilities (N3), (N7) and (N12) do not provide non-negative minimal monads.

For example, the morphism $\alpha$ in (N3) is
$$\alpha:\op3(2)\oplus3\cdot\op3\oplus3\cdot\op3(-1)\oplus\op3(-3)\longrightarrow2\cdot\op3(2)\oplus\op3$$
and the minimality of the monad implies the first column of $\alpha$ to be zero.
\end{Remark}

\begin{prop}\label{propa3}
If the bundle $\E$ is the cohomology of the non-negative minimal monad (N13) of the form
$$ \op3(-1)\oplus3\cdot\op3(-3) \stackrel{\alpha}{\longrightarrow}
5\cdot\op3(-2)\oplus5\cdot\op3(1) \stackrel{\beta}{\longrightarrow}
\op3\oplus3\cdot\op3(2),
$$
then $\E$ cannot be stable.
\end{prop}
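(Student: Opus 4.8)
The plan is to prove that $H^0(\E)\neq0$; since $\E$ has rank $2$ and $c_1(\E)=-1$, the stability criterion recalled at the beginning of the proof of Lemma~\ref{no syz deg 0} then shows at once that $\E$ cannot be stable. Because the monad is non-negative we have $H^0(\mathcal C)=H^0\big(\op3(-1)\oplus3\cdot\op3(-3)\big)=0$, so the two sequences $0\to\mathcal C\to\ker\beta\to\E\to0$ and $0\to\ker\beta\to\mathcal B\xrightarrow{\beta}\mathcal A\to0$ give $H^0(\E)\cong H^0(\ker\beta)=\ker\big(H^0(\mathcal B)\xrightarrow{\beta}H^0(\mathcal A)\big)$. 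Only the summand $5\cdot\op3(1)$ of $\mathcal B$ carries sections, and on it $\beta$ has no component into the summand $\op3$ of $\mathcal A$ (such an entry would have degree $-1$); hence $\beta|_{5\cdot\op3(1)}$ is given by a $3\times5$ matrix $L$ of linear forms with values in $3\cdot\op3(2)$. Setting $\mathcal K:=\ker\big(L\colon 5\cdot\op3(1)\to 3\cdot\op3(2)\big)$ I obtain $H^0(\E)=H^0(\mathcal K)$, so it suffices to produce a nonzero section of $\mathcal K$ (equivalently, a degree-$0$ syzygy of $\beta$ in the sense of Lemma~\ref{no syz deg 0}).

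The structural input comes from the monad relation $\beta\circ\alpha=0$. The summand $\op3(-1)$ of $\mathcal C$ maps under $\alpha$ into $\mathcal B$ with vanishing component into $5\cdot\op3(-2)$, so $\alpha|_{\op3(-1)}=(0,P)$ with $P=(P_1,\dots,P_5)$ a vector of five quadrics, and $\beta\alpha=0$ forces $LP=0$; thus $P$ is a section of $\mathcal K(1)$. Moreover $\alpha$ is locally left invertible (this is precisely the hypothesis making $\E$ locally free), so its first column is nowhere zero, which says exactly that the $P_i$ have no common zero. Consequently $P\colon\op3(-1)\to 5\cdot\op3(1)$ is a subbundle inclusion factoring through $\mathcal K$.

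The heart of the argument is then a reflexivity computation. I would form the rank-$4$ bundle $\mathcal V:=\big(5\cdot\op3(1)\big)/P\big(\op3(-1)\big)$, which is locally free exactly because $P$ is nowhere vanishing; since $LP=0$, $L$ descends to $\bar L\colon\mathcal V\to 3\cdot\op3(2)$, and $\mathcal G:=\ker\bar L$ fits into
\[ 0\longrightarrow\op3(-1)\stackrel{P}{\longrightarrow}\mathcal K\longrightarrow\mathcal G\longrightarrow0. \]
Provided $L$ has maximal generic rank $3$, the sheaf $\mathcal G$ has rank $1$; as the kernel of a morphism of locally free sheaves it is reflexive, and a reflexive rank-$1$ sheaf on the smooth threefold $\mathbb P^3$ is invertible, so $\mathcal G\cong\op3(c_1(\mathcal G))$. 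Here $c_1(\mathcal G)=c_1(\coker L)\geq0$ because $\coker L$ is a torsion sheaf, whence $H^0(\mathcal G)\neq0$. Feeding this into the displayed sequence and using $H^0(\op3(-1))=H^1(\op3(-1))=0$ gives $H^0(\mathcal K)\cong H^0(\mathcal G)\neq0$, hence $H^0(\E)\neq0$ and $\E$ is not stable.

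The step I expect to be the real obstacle is the identification $\mathcal G\cong\op3(c_1(\mathcal G))$ with $c_1(\mathcal G)\geq0$: this is where smoothness of $\mathbb P^3$ enters decisively (reflexive rank-$1$ sheaves are line bundles) and where one must know that $\bar L$ is generically surjective, i.e. that $L$ has generic rank $3$. Establishing the latter—ruling out that the three degree-$(-2)$ generators of $H^1_*(\E)$ become generically dependent through their linear syzygies—is the point to nail down; if $L$ did drop generic rank, then $\mathcal G$ would have rank $\geq2$ and would require a separate treatment, although the extra syzygies present in that case should only make sections of $\mathcal K$ more abundant. The remaining checks—that the quadratic and quartic blocks of $\alpha$ and $\beta$ are compatible with $\beta$ being surjective while $P$ stays base-point free—are routine book-keeping rather than conceptual difficulties.
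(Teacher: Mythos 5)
Your reduction $H^0(\E)\simeq H^0(\mathcal K)$, the extraction of the nowhere-vanishing quadric column $P$ with $LP=0$ from the monad equation, and the entire argument in the case where $L$ has generic rank $3$ are correct. In that case your route is actually more direct than the paper's: the paper also splits into the cases $\rk(\im L)=3$ and $\rk(\im L)\le2$, but handles rank $3$ by a contradiction argument, producing (via the snake lemma and a reflexivity argument) an inclusion $\mathcal I_Y(l)\subset\E$ and playing the stability bound $l\le-1$, equivalently $c_1(\im L)\ge7$, against the slope bound $c_1(\im L)\le6$ coming from $\im L\subset 3\cdot\op3(2)$.

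The genuine gap is the case $\rk(\im L)\le2$, which you dismiss with the heuristic that dropping rank "should only make sections of $\mathcal K$ more abundant". That heuristic is false at the level of the matrix $L$. Consider
$$
L_0=\begin{pmatrix} x&y&z&w&0\\ 0&x&y&z&w\\ 0&0&0&0&0 \end{pmatrix}\colon\ 5\cdot\op3(1)\longrightarrow 3\cdot\op3(2).
$$
Its top $2\times5$ block is fiberwise surjective onto $2\cdot\op3(2)$, so $L_0$ has generic rank $2$; yet $H^0(\ker L_0)=0$. Indeed, a global section of $\ker L_0$ is a $5$-tuple of linear forms $(\ell_1,\dots,\ell_5)$ such that both $(\ell_1,\dots,\ell_4)$ and $(\ell_2,\dots,\ell_5)$ are linear syzygies of the regular sequence $(x,y,z,w)$, hence each of Koszul form $A\cdot(x,y,z,w)^{T}$ with $A$ skew-symmetric; the overlap conditions force both skew matrices to vanish, so all $\ell_i=0$. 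Thus in the low-rank case one cannot conclude $H^0(\mathcal K)\neq0$ from rank and degree considerations on $L$ and $P$ alone: what must be shown there is precisely that no minimal monad of type (N13) with $\rk(\im L)\le2$ can have stable cohomology, and this forces one to use more of the monad (the quadric and quartic blocks) together with the stability hypothesis. This is exactly where the paper spends the first half of its proof: assuming $\E$ stable and $\rk(\im L)\le2$, it bounds $h^0(\im L(1))\le40$ by \cite[Lemma 3.7]{HR91}, deduces $h^0(\E(1))\ge 50-40-1=9$, and derives a contradiction because a nonzero section of $\E(1)$ then vanishes on a degree-$8$ curve $X$ with $h^0(\mathcal I_X(1))\ge 8>h^0(\op3(1))=4$. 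Until you supply an argument of this kind for $\rk(\im L)\le 2$ (necessarily a contradiction-from-stability argument rather than a direct proof that $H^0(\E)\neq0$ from the shape of $L$), your proof is incomplete.
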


\begin{proof}
Since the monad is minimal, the image under $\beta$ of the summand $5\cdot\op3(1)$ in the middle term must be contained in $3\cdot\op3(2)$. Letting $K:=\ker\beta$ we get a commutative diagram
$$ \xymatrix{
	& 0 \ar[d] & 0 \ar[d] & 0 \ar[d] & \\
	0 \ar[r]& K' \ar[r] \ar[d]^{\tau} & 5\cdot\op3(1) \ar[r]^{\beta'} \ar[d] & 3\cdot\op3(2) \ar[d] & \\
	0 \ar[r] & K \ar[r]\ar[d] & 5\cdot\op3(-2)\oplus5\cdot\op3(1) \ar[r]^{\beta} \ar[d] & \op3\oplus3\cdot\op3(2) \ar[d]\ar[r] & 0  \\
	0 \ar[r] & K'' \ar[r] & 5\cdot\op3(-2) \ar[r]^{\beta''}\ar[d] & \op3 \ar[d]\ar[r] & 0  \\
	& & 0 & 0 & 
} $$
where $\beta'$ is the restricted morphism.
Note that
$$ h^0(\E)=h^0(K)=h^0(K') ~~{\rm and}~~ h^0(\E(1))=h^0(K(1))-1=h^0(K'(1))-1. $$

We assume that $\E$ is stable, so that $h^0(\E)=0$, and we show that this leads to a contradiction.

Set $I:=\im(\beta')$, and note that this is a subsheaf of $3\cdot\op3(2)$, thus, by \cite[Lemma 3.7]{HR91}, $h^0(I(1))\le 20$ if $\rk(I)=1$ and $h^0(I(1))\le 40$ if $\rk(I)=2$. Moreover, we have that $h^0(K'(1)) \ge 50 - h^0(I(1))$, thus $h^0(\E(1))\ge9$ when $\rk(I)\le2$. Taking a non trivial $s\in H^0(\E(1))$, let $X=(s)_0$ be its zero locus; this is a curve of degree 8. From the exact sequence
$$ 0 \to \op3(-1) \to \E \to I_X \to 0 $$
we get that $h^0(I_X(1))=h^0(\E(1))-1\ge8$, meaning that $X$ is contained in at least 8 distinct planes, which is clearly impossible. It follows that $\rk(I)=3$; since $\mu(I)\le2$, we obtain $c_1(I)\le6$.

Since $h^0(K''(1))=0$, the composition
$$ \op3(-1) \to \op3(-1)\oplus3\cdot\op3(-3) \stackrel{\alpha}{\rightarrow} K 
\rightarrow K''$$
must vanish, thus the morphism $\op3(-1) \to \op3(-1)\oplus3\cdot\op3(-3) \stackrel{\alpha}{\rightarrow} K$ must factor through $K'$, leading us to the following commutative diagram
$$ \xymatrix{
	& 0 \ar[d] & 0 \ar[d] &  & \\
	0 \ar[r]& \op3(-1) \ar[r]^{\sigma} \ar[d] & K' \ar[r] \ar[d]^{\tau} & I_Y(l) \ar[r]\ar[d] & 0\\
	0 \ar[r] & \op3(-1)\oplus3\cdot\op3(-3) \ar[r]^{~~~~~~~~~~\beta}\ar[d] & K \ar[r]\ar[d] & E \ar[r] & 0  \\
	 & 3\cdot\op3(-3) \ar[r]\ar[d] & \coker\tau \ar[d] &  &   \\
	& 0  & 0 &  &
} $$
where $\sigma$ is the induced non trivial section in $H^0(K(1))$, $Y$ is its zero locus (remark that $Y$ is a curve because $h^0(K)=0$ by hypothesis), and $l:=c_1(K)+1$, so that $\coker\sigma$ is isomorphic to the twisted ideal sheaf $I_Y(l)$. 


Let $B$ be the kernel of the morphism in the third line of the previous diagram; since $\coker\tau$ is torsion 
free (it is a subsheaf of $K''$, which is locally free), $B$ must be reflexive. We get, from the snake lemma, the exact sequence
$$ 0 \to B \to I_Y(l) \to \E. $$
If $B\neq0$, then $\rk(B)=1$ and we conclude that in fact $B$ is a line bundle, and it follows that the 
quotient $I_Y(l)/B$ is a torsion sheaf; but $I_Y(l)/B$ is a subsheaf of $\E$, so we must conclude that $B=0$, and $I_Y(l)$ is a subsheaf of $\E$.

The stability of $\E$ implies that $l\le-1$, thus in fact $c_1(K)\le-2$. It then follows that $c_1(I)=5-c_1(K)\ge7$, thus providing the desired contradiction. 
\end{proof}

It is not clear to us whether or not the other non-negative monads listed in Table \ref{table:5} (that is, the ones not marked in red) actually exist.


\section{ Classification of the positive minimal monads for $c_2\leq8$}\label{section4}

While we listed in Section \ref{sec:monads} all possible (positive and non-negative) minimal Horrocks monads, we will now present that all positive minimal monads for stable rank 2 bundles with $c_1=-1, c_2\leq8$ that actually exist. We also answer to a question left by Hartshorne \cite[(Q2), p. 806]{HR91} about the conditions \ref{itema1}--\ref{itema4} on an integer sequence $\X$ being sufficient for the existence of a stable vector bundle with spectrum $\X$, see Proposition \ref{prop4}. 

Following the labeling presented in Tables \ref{table:c2=6} and \ref{table:c2=8}, we first observe that (P1) and (P7) are Hartshorne monads and (P2), (P6), (P13), (P16), (P19) are Ein monads, see Section \ref{Serre} above; we will therefore focus on the remaining possibilities. We start with a non-existence result.

\begin{Lemma}\label{lema1}
There are no minimal monads of type (P5), (P11) and (P18), as labeled in Tables \ref{table:c2=6} and \ref{table:c2=8}, whose cohomology bundle is stable.
\end{Lemma}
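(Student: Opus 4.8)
The plan is to dispose of all three cases by a single uniform argument that exploits a common numerical feature visible in Tables \ref{table:c2=6} and \ref{table:c2=8}: in each of (P5), (P11) and (P18) the largest entry of the tuple $\boldsymbol{b}$ equals $2$, so the middle term $\mathcal{B}$ contains a summand $\op3(2)$, whereas \emph{every} summand of the right-hand term $\mathcal{A}$ has degree at most $2$. Spelling out the monads one finds
\begin{align*}
\text{(P5)}&\colon\ \ \cdots\to 2\op3\oplus 2\op3(-1)\oplus\op3(2)\oplus\op3(-3) \stackrel{\alpha}{\to} 2\op3(2),\\
\text{(P11)}&\colon\ \ \cdots\to 3\op3\oplus 3\op3(-1)\oplus\op3(2)\oplus\op3(-3) \stackrel{\alpha}{\to} \op3(1)\oplus 2\op3(2),\\
\text{(P18)}&\colon\ \ \cdots\to \op3\oplus\op3(-1)\oplus 2\op3(1)\oplus 2\op3(-2)\oplus\op3(2)\oplus\op3(-3) \stackrel{\alpha}{\to} 3\op3(2).
\end{align*}
Since these are positive monads they are in particular non-negative, so Lemma \ref{no syz deg 0} applies: it will be enough to exhibit a syzygy of $\alpha$ of degree $\ge 0$ in order to conclude that the cohomology bundle cannot be stable.

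First I would examine the restriction of $\alpha$ to the summand $\op3(2)\subset\mathcal{B}$. A morphism $\op3(2)\to\op3(d)$ is multiplication by a homogeneous form of degree $d-2$, hence it vanishes identically when $d<2$ and is a scalar when $d=2$. Because no summand of $\mathcal{A}$ has degree exceeding $2$, the composite $\op3(2)\hookrightarrow\mathcal{B}\stackrel{\alpha}{\to}\mathcal{A}$ can be nonzero only into the $\op3(2)$-summands of $\mathcal{A}$, and there it is prescribed by a tuple of constants.

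Next I would bring in minimality. If that tuple of constants were nonzero, the image of the line subbundle $\op3(2)\subset\mathcal{B}$ would be a scalar-defined rank-one subbundle of the $\op3(2)$-part of $\mathcal{A}$; a subbundle cut out by constants always splits off, so after a linear change of basis of $\mathcal{A}$ it becomes a direct summand $\op3(2)$ of $\mathcal{A}$ arising as the image of a line subbundle of $\mathcal{B}$ --- precisely what the minimality condition on \eqref{eq6} prohibits. Hence all the constants vanish, so $\op3(2)\subseteq\ker\alpha$. The inclusion $\op3(2)\hookrightarrow\ker\alpha$ is a nonzero element of $H^0(\ker\alpha(-2))$, that is, a syzygy of $\alpha$ of degree $2\ge 0$; equivalently, since here $-2\le a_1$, the isomorphism in the proof of Lemma \ref{no syz deg 0} gives $H^0(\E(-2))\neq 0$, contradicting stability. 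Carrying this out verbatim in each of the three cases finishes the proof.

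The step I expect to require the most care is the minimality deduction: one must verify that the image of $\op3(2)$ is a genuine \emph{direct summand} of $\mathcal{A}$ rather than merely a nonsplit subsheaf, since it is this splitting that makes the constancy of the entries decisive. This is exactly where the degree bound $\deg\le 2$ on the summands of $\mathcal{A}$ is essential --- it forces the relevant entries of $\alpha$ to be scalars instead of positive-degree forms, and scalar-defined line subbundles always split off.
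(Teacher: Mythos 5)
Your proof is correct and follows essentially the same route as the paper's: minimality forces the (necessarily constant) entries of $\alpha$ on the $\op3(2)$ summand of the middle term to vanish, so the inclusion of that summand into $\mathcal{B}$ is a syzygy of degree $2\ge 0$, contradicting stability by Lemma \ref{no syz deg 0}. The only difference is presentational: the paper carries this out for (P5) and declares (P11) and (P18) ``similar,'' whereas you run the uniform argument for all three cases and spell out why minimality kills the scalar entries.
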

\begin{proof}
We work with the monad of type (P5); the other two cases can be argued in a similar way.

Suppose, by contradiction, that there exists a minimal monad of type (P5) whose cohomology $\E$ is a stable bundle. The first column of the matrix representing the morphism
$$ \alpha : \op3(2)\oplus2\cdot\op3\oplus2\cdot\op3(-1)\oplus\op3(-3) \to 2\cdot\op3(2) $$
consists of zeros, by minimality. Let $\jmath$ denote the inclusion of $\op3(2)$ into the first summand of $\op3(2)\oplus2\cdot\op3\oplus2\cdot\op3(-1)\oplus\op3(-3)$, so that $\alpha\circ \jmath=0$. This means that $j$ is a syzygy of degree 2, contradicting Lemma \ref{no syz deg 0}.
\end{proof}

Our next step is to establish the existence of the remaining types of minimal monads for $c_2=6$.

\begin{Theorem} \label{thm:p3+p4}
There are minimal monads of type (P3) and (P4), as labeled in Table \ref{table:c2=6}, whose cohomology bundle is stable.
\end{Theorem}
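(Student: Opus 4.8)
The plan is to construct stable rank 2 bundles realizing each of these monads via the Serre construction, using the machinery set up in Section \ref{Serre}. The most natural strategy is to exhibit an explicit curve $Y$ whose Serre construction produces a bundle $\E$ with $c_1(\E)=-1$, $c_2(\E)=6$, and the correct spectrum (either $\X_2^6=\{r_0^2r_1\}$ for (P3) or $\X_3^6=\{r_0r_1^2\}$ for (P4)), and then to verify that the associated minimal Horrocks monad has exactly the shape prescribed in Table \ref{table:c2=6}. First I would treat (P4) since its $\boldsymbol{b}=1^3$, $\boldsymbol{a}=2^2$ data suggests a curve built from components of moderate degree; by equation \eqref{c2-degY}, with an appropriate twist $k$, I would look for a curve $Y$ with the right degree and with $\omega_Y$ of the correct type so that the Serre extension \eqref{eq-HS} yields a rank 2 bundle with $c_2=6$. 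The key is to control $H^1_*(\E)$, since its minimal generators determine the tuple $\boldsymbol{a}$, and the self-duality argument from Section \ref{section1} then forces the middle term to be the prescribed sum of line bundles with the $\boldsymbol{b}$ degrees.

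A cleaner route for at least one of the two monads is to invoke Lemma \ref{lema48}, which is precisely the tool for producing new monads from known ones by adding a complete intersection curve disjoint from the zero locus of a section. Starting from a bundle already known to exist (for instance a Hartshorne-type bundle, or the $c_2=4$ bundles constructed in Section \ref{subsec:c2=4}) together with a section $\sigma\in H^0(\E(r))$, I would add a complete intersection curve $C$ of type $(u,v)$ with $u+v=2r-1$ disjoint from $X=(\sigma)_0$; Lemma \ref{lema48} then produces a new stable bundle $\E'$ with $c_2(\E')=c_2(\E)+uv$ whose minimal monad acquires the two extra summands $\op3(r-1-u)\oplus\op3(r-1-v)$ in the middle and $\op3(r-1)$ on the right. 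Matching $c_2(\E')=6$ and matching the added degrees against the Table \ref{table:c2=6} data should pin down the parameters $r$, $u$, $v$ and reproduce the (P3) monad, whose non-homotopy-free structure (with its asymmetric $\boldsymbol{a}=(1,2)$) is exactly what one expects from this ``curve-adding'' operation.

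The verification that the resulting monad is \emph{minimal} and has precisely the listed terms is where the care lies: one must confirm that no spurious cancellations occur, i.e. that the degrees of the minimal generators of $H^1_*(\E')$ are exactly those forced by the construction and no summand of $\mathcal{A}$ is hit by a line subbundle of the middle term. For this I would compute the relevant $h^1(\E'(l))$ from the defining exact sequences (the Serre extension together with the resolution \eqref{res-ic} of $C$ and the comparison sequences \eqref{iy-ic-ox}), and check that the spectrum computed via properties \ref{itemb1}--\ref{itema4} matches $\X_2^6$ or $\X_3^6$ as required. Stability itself is cheap: by \eqref{eq-HS} it suffices to check that $Y$ (respectively $Y'=X\sqcup C$) is not contained in a surface of the degree $k-1$, which for disjoint unions of curves of the relevant degrees is immediate.

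The main obstacle I anticipate is the bookkeeping of $H^1_*$ and the confirmation that the spectrum comes out correctly rather than one of the competing spectra for $c_2=6$; in particular, for (P4) I must rule out that the constructed bundle accidentally has spectrum $\X_1^6$ or lands on the non-existent (P5) monad, which by Lemma \ref{lema1} has no stable cohomology. Guaranteeing that $H^1_*(\E)$ has exactly two generators in degree $-2$ (so $\rho(-2)=2$, $\rho(-1)=0$) and that the $\boldsymbol{b}$-data is $1^3$ rather than $0^2,2$ requires choosing the curve $Y$ so that it lies on the right quadric or rational surface geometry; I expect that taking $Y$ to be (a suitable configuration whose two-dimensional linked curve is) a curve on a smooth quadric, in the spirit of Proposition \ref{degree0}, will produce the symmetric $\boldsymbol{b}=1^3$ pattern. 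Once the correct curve is identified, the monad shape follows formally from Lemma \ref{lema3} and the self-duality discussion, so the geometric choice of $Y$ is really the crux of the argument.
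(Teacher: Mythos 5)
Your approach is genuinely different from the paper's: the paper does not use the Serre construction here at all, but simply exhibits explicit matrices $\alpha$ and $\beta$ for monads of type (P3) and (P4) (found with Macaulay2), checks that $\alpha$ is surjective, $\beta$ is injective and $\alpha\circ\beta=0$, and then deduces stability from Lemma \ref{no syz deg 0} by verifying that $\alpha$ admits no syzygy of degree $0$. For (P3) your bootstrapping plan can in fact be made to work: start from the Ein bundle with $c_2=4$ given by the monad \eqref{cc2=4}, take a section $\sigma\in H^0(\E(2))$ whose zero locus is a curve $X$ (one checks $H^0(\E(2))\simeq H^0(\mathcal{I}_Y(4))\neq0$ and that a suitable quartic through $Y$ yields a section vanishing in codimension two), and apply Lemma \ref{lema48} with $r=2$ and a conic ($u=1$, $v=2$) in a generic plane, chosen to avoid the finitely many points of $X$ in that plane; the output is exactly the (P3) shape, with stability and minimality inherited from the lemma. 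This is the same mechanism the paper itself uses one step later to produce (P9) and (P12) out of (P3) and (P4) in Theorem \ref{monads c2=8}, so this half of your argument is a legitimate alternative with a pleasant bonus: stability comes for free rather than from a computer check.

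The genuine gap is (P4). Lemma \ref{lema48} cannot reach it: matching $\op3(r-1)$ against the right-hand term $2\cdot\op3(2)$ forces $r=3$ and $u+v=5$, and matching the middle term $3\cdot\op3(-2)\oplus3\cdot\op3(1)$ forces $\{2-u,2-v\}=\{-2,1\}$, i.e. $(u,v)=(4,1)$; the parent bundle would then have $c_2=6-uv=2$ and monad $\op3(-3)\to 2\cdot\op3(-2)\oplus2\cdot\op3(1)\to\op3(2)$, i.e. $\boldsymbol{a}=(2)$, which is impossible since for $c_2=2$ the only spectrum is $\{-1,0\}$ and Theorem \ref{teo1} forces $\boldsymbol{a}=(1)$. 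So no "curve-adding" from any smaller stable bundle produces (P4), and your fallback --- a Serre construction from a curve on a quadric "in the spirit of Proposition \ref{degree0}" --- is only a hope, and one pointing the wrong way: Proposition \ref{degree0} manufactures generators of $H^1_*$ in \emph{positive} degrees (hence negative monads), whereas (P4) needs exactly two generators in degree $-2$ and, crucially, middle data $\boldsymbol{b}=1^3$ rather than $(0^2,2)$. Since (P4) and (P5) share the same spectrum and the same $\boldsymbol{a}$, and (P5) admits no stable cohomology (Lemma \ref{lema1}), you must produce a concrete curve and compute the minimal free resolution of its Rao module to land on $\boldsymbol{b}=1^3$; this is exactly the step your proposal leaves open, and it is the step the paper settles by writing down an explicit monad and verifying it by computer.
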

\begin{proof}
Our argument is pretty straightforward: we provide explicit examples of monads of the desired type, which were found with the help of Macaulay2. 

A minimal monad of type (P3)
\begin{equation*}
\op3(-2)\oplus\op3(-3)\stackrel{\beta}{\rightarrow}\op3(1)\oplus2\cdot\op3\oplus2\cdot\op3(-1)\oplus\op3(-2)
\stackrel{\alpha}{\rightarrow}\op3(2)\oplus\op3(1),
\end{equation*} 
is given by the morphisms
$$\alpha=\left(
\begin{array}{cccccc}
x& z^2&w^2& y^3& 0& x^4\\
0& y& x& z^2&w^2&y^3\\ 
\end{array} \right)
~\mbox{ and }~\beta=\left(
\begin{array}{cc}
-y^3&-x^4\\
w^2&0\\
-z^2&-y^3\\
x&w^2\\
-y&-z^2\\
0&x\\
\end{array}
\right).$$
One can check that $\alpha$ is surjective, $\beta$ is injective and $\alpha\circ\beta=0$. In addition, one can also check (with the help of Macaulay2) that $\alpha$ does not admit a syzygy of degree 0, thus Lemma \ref{no syz deg 0} implies that the cohomology of the monad is indeed stable.

For the possibility (P4), namely
\begin{equation*}
2\cdot\op3(-3) \stackrel{\beta}{\rightarrow} 
3\cdot\op3(-2)\oplus3\cdot\op3(1)
\stackrel{\alpha}{\rightarrow} 2\cdot\op3(2),
\end{equation*} 
we consider the morphisms
$$\alpha=\left(
\begin{array}{cccccc}
y& x&z& 0& w^4& x^4\\
w& z& y& x^4&0&w^4\\ 
\end{array} \right)
~\mbox{ and }~\beta=\left(
\begin{array}{cc}
0&x^4\\
w^4&0\\
x^4&w^4\\
-y&-w\\
-x&-z\\
-z&-y\\
\end{array}
\right),$$
Again, one can check that $\alpha$ is surjective, $\beta$ is injective and $\alpha\circ\beta=0$. In addition, one can also check (with the help of Macaulay2) that $\alpha$ does not admit a syzygy of degree 0, thus Lemma \ref{no syz deg 0} implies that the cohomology of the monad is indeed stable.
\end{proof}

\begin{prop}\label{propa1}
There are no monads of type (P15). 
\end{prop}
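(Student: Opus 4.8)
The plan is to show that the morphism $\alpha$ in any monad of type (P15) fails to be surjective, so that no such monad can exist. Reading off $\boldsymbol{a}=(1,3)$ and $\boldsymbol{b}=(1,1,1)$ from Table \ref{table:c2=8}, the monad (P15) has the form
$$ \op3(-2)\oplus\op3(-4) \stackrel{\beta}{\longrightarrow} 3\cdot\op3(1)\oplus3\cdot\op3(-2) \stackrel{\alpha}{\longrightarrow} \op3(1)\oplus\op3(3), $$
so $\alpha$ is a $2\times 6$ matrix whose entries are graded by the twists: into the summand $\op3(1)\subset\mathcal{A}$ one has three constants $c_1,c_2,c_3$ (coming from the three copies of $\op3(1)$ in $\mathcal{B}$) and three cubics $f_1,f_2,f_3$ (coming from the three copies of $\op3(-2)$), while the row into $\op3(3)$ consists of quadrics and quintics.

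First I would invoke minimality. The only summands of $\mathcal{B}$ sharing the twist of the summand $\op3(1)\subset\mathcal{A}$ are the three copies of $\op3(1)$; were some $c_i$ a nonzero scalar, that copy of $\op3(1)$ would split off as a common direct summand of $\mathcal{B}$ and $\mathcal{A}$ after clearing the rest of its row and column by positive-degree row and column operations, contradicting minimality. Hence $c_1=c_2=c_3=0$, and the component of $\alpha$ landing in $\op3(1)$ is carried entirely by the three cubics $f_1,f_2,f_3$.

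I would then finish exactly as in the proof of Theorem \ref{negative}: the three forms $f_1,f_2,f_3$ on $\mathbb{P}^3$ necessarily have a common zero $p\in V(f_1)\cap V(f_2)\cap V(f_3)$ by the projective dimension theorem (trivially so if some $f_i\equiv 0$). At $p$ the whole row of $\alpha$ into $\op3(1)$ vanishes, so the image of $\alpha\otimes k(p)$ lies in the one-dimensional fiber of $\op3(3)$; thus $\rank\alpha(p)<2$ and $\alpha$ is not surjective at $p$, which is the desired contradiction.

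The argument is short and essentially repeats the non-surjectivity mechanism already used in Remark \ref{obs1} and in the proof of Theorem \ref{negative}. The one point I would check carefully is the minimality reduction, namely that a nonzero $c_i$ genuinely forces a splitting of an $\op3(1)$ summand off both $\mathcal{B}$ and $\mathcal{A}$, and is not merely removable by an innocuous change of basis; once this is secured, I expect no real obstacle.
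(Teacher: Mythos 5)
Your proof is correct and takes essentially the same route as the paper's: minimality forces the three constant entries of the row of $\alpha$ mapping to $\op3(1)$ to vanish, leaving only three cubics, and any common zero $p$ of those cubics (which exists by dimension theory in $\mathbb{P}^3$) gives $\rank\alpha(p)<2$, contradicting surjectivity. Your additional care in justifying the minimality reduction (splitting off a common $\op3(1)$ summand after clearing the row and column) only makes explicit what the paper asserts implicitly, so the two arguments coincide.
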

\begin{proof}
We suppose that the monad (P15) exists; as in the proof of the Theorem \ref{negative} the matrix of $\alpha$ has the form 
$$\alpha=\left(\begin{array}{cccccc}
0&0&0&q_1&q_2&q_3\\
\ast&\ast&\ast&\ast&\ast&\ast\\
\end{array}\right),$$
where $q_i\in\Gamma\left(\O(3)\right)$, $i=1,2,3$. If we take a point $p\in \mathcal{V}(q_1)\cap \mathcal{V}(q_2)\cap \mathcal{V}(q_3)$, then $\rank \alpha(p)<2$ which contradicts the fact that $\alpha$ is surjective. Therefore, monads of type (P15) do not exist.
\end{proof}

Next, we establish the existence of the remaining positive minimal monads listed in Table \ref{table:c2=8}, besides the Ein and Hartshorne type monad. 

\begin{Theorem}\label{monads c2=8}
There are minimal monads of type (P8), (P9), (P10), (P12), (P14) and (P17), as labeled in Table \ref{table:c2=8}, whose cohomology bundle is stable.
\end{Theorem}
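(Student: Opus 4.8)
The plan is to separate the six monads into those that can be constructed from a stable bundle of smaller second Chern class via Lemma \ref{lema48}, and the single exceptional case (P17), which cannot. Recall that gluing a complete intersection curve $C$ of type $(u,v)$ with $u+v=2r-1$ onto a stable bundle $\E$ carrying a section in $H^0(\E(r))$ appends one summand $\op3(r-1)$ to the right-hand term, one pair $\op3(r-1-u)\oplus\op3(r-1-v)$ to the middle term, and one summand $\op3(-r)$ to the left-hand term, while increasing $c_2$ by $uv$; in the language of Lemma \ref{lema3} this adds the entry $r-1$ to $\boldsymbol{a}$ and the entry $r-1-u$ to $\boldsymbol{b}$. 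Since Lemma \ref{lema48} delivers a stable cohomology bundle together with a \emph{minimal} monad of exactly this shape, for each monad of the first group it suffices to name a base bundle and a complete intersection whose numerical data reproduce the tuples $(\boldsymbol{a},\boldsymbol{b})$ recorded in Table \ref{table:c2=8}.

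First I would realize (P8) by attaching a disjoint conic ($r=2$, $uv=2$) to the Ein bundle (P2) with $c_2=6$; (P9) by attaching a disjoint $(1,4)$ complete intersection ($r=3$, $uv=4$) to the Hartshorne bundle with $c_2=4$; (P10) by attaching a disjoint $(1,4)$ complete intersection ($r=3$) to the Ein bundle with $c_2=4$; (P12) by attaching a disjoint conic ($r=2$) to the bundle (P4) produced in Theorem \ref{thm:p3+p4}; and (P14) by attaching a disjoint $(1,6)$ complete intersection ($r=4$, $uv=6$) to the bundle of display \eqref{c_2=2} with $c_2=2$. For each of these one verifies three things: that $uv$ equals the prescribed increment of $c_2$ (all of which I have checked against \eqref{eq1}); that the chosen base bundle admits a section at the prescribed twist $r$ whose zero scheme is a curve (automatic because $r$ is at least the minimal section degree of the base and a generic section of a globally generated twist vanishes in codimension two); and that a generic complete intersection of the stated type avoids that curve (two curves in $\mathbb{P}^3$ meet in the expected dimension $-1$, so a generic member is disjoint). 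Comparing the appended summands with the tables then confirms that the minimal monad supplied by Lemma \ref{lema48} is of the advertised type.

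Finally, (P17) escapes this scheme: its tuple $\boldsymbol{b}=(1,1,2)$ contains no zero, so any base would need $\boldsymbol{b}$ contained in $(1,1,2)$ and $\boldsymbol{a}$ contained in $(2,3)$, but a short case analysis shows that no admissible monad with $c_2<8$ has such data. Here I would imitate Theorem \ref{thm:p3+p4} and exhibit, with the aid of Macaulay2, explicit matrices $\alpha,\beta$ presenting
$$ \op3(-3)\oplus\op3(-4) \stackrel{\beta}{\longrightarrow} 2\cdot\op3(1)\oplus\op3(2)\oplus2\cdot\op3(-2)\oplus\op3(-3) \stackrel{\alpha}{\longrightarrow} \op3(2)\oplus\op3(3), $$
and check that $\alpha\circ\beta=0$, that $\beta$ is a subbundle inclusion, that $\alpha$ is surjective, and that $\alpha$ has no syzygy of degree $\ge 0$; Lemma \ref{no syz deg 0} then guarantees that the cohomology is stable. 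I expect (P17) to be the main obstacle precisely because it does not descend to a previously constructed bundle and so demands an explicit, somewhat ad hoc, example whose surjectivity and syzygy-freeness must be certified computationally; a lesser difficulty is confirming, in the five Lemma \ref{lema48} cases, that the base bundle genuinely carries a section at the chosen non-minimal twist admitting a disjoint complete intersection of the required type.
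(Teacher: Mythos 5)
Your proposal uses the same two tools as the paper --- Lemma \ref{lema48} plus explicit monads whose stability is certified via Lemma \ref{no syz deg 0} --- but deploys them differently. The paper applies Lemma \ref{lema48} only to (P9) and (P12), building on the $c_2=6$ bundles (P3) and (P4) from Theorem \ref{thm:p3+p4} with a disjoint conic, and then exhibits explicit Macaulay2 matrices for (P8), (P10), (P14) \emph{and} (P17). You instead push the reduction further: (P8), (P10), (P14) are obtained from the classical Ein and Hartshorne bundles with $c_2\in\{2,4,6\}$ by attaching suitable complete intersections, leaving only (P17) to be done by hand. I checked your numerics in all five cases (the added entries $r-1$ in $\boldsymbol{a}$ and $r-1-u$ in $\boldsymbol{b}$, and the increments $uv$ of $c_2$) and they match Table \ref{table:c2=8}; your case analysis showing that (P17) cannot arise from Lemma \ref{lema48} (the only numerical candidates, $\boldsymbol{a}=(3),\boldsymbol{b}=(1,2)$ with $c_2=4$ and $\boldsymbol{a}=(2),\boldsymbol{b}=(1^2)$ with $c_2=2$, are excluded by the classification of monads for $c_2\le 4$) is also correct. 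Your route is more economical and more conceptual where it applies, since the base bundles are classical; the paper's route trades this for self-contained explicit examples.

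There are, however, two genuine gaps. First, the hypothesis of Lemma \ref{lema48} that the base bundle carries a section $\sigma\in H^0(\E(r))$ whose zero scheme is a \emph{curve} is not ``automatic,'' and your parenthetical justification (global generation of $\E(r)$) is neither verified nor the right statement. For (P10) this is genuinely free: $r=3$ is the Serre twist of the Ein bundle with $c_2=4$, so the Serre section itself works. But for (P8) you need $H^0(\E(2))\neq 0$ for the Ein bundle (P2), whose natural Serre twist is $3$; this is true (if $Y_1,Y_2$ are the two $(2,3)$ complete intersections defining the bundle, with quadrics $q_1,q_2$, then $q_1q_2\in H^0(\mathcal{I}_{Y_1\sqcup Y_2}(4))\cong H^0(\E(2))$, and since $H^0(\E(1))=0$ generically every nonzero section of $\E(2)$ vanishes on a curve), but it requires proof. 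For (P9) and (P14), where $r$ exceeds the minimal section degree, one must rule out that \emph{all} sections of $\E(r)$ vanish on divisors, e.g.\ by the dimension count $\dim\bigcup_{d\ge 1}H^0(\op3(d))\cdot H^0(\E(r-d))<h^0(\E(r))$; this works but is not stated. For (P12) the base is the (P4) bundle, known only through the explicit example of Theorem \ref{thm:p3+p4}, so $H^0(\E(2))\neq 0$ and the curve condition require a computational check (the paper has exactly the same unproved assertion, so here you are on equal footing with it).

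Second, and more seriously, for (P17) you describe the shape of the monad and the verification procedure but do not exhibit the morphisms $\alpha,\beta$. Since (P17) is precisely the case that cannot be reduced to smaller bundles, its existence is the content of the claim, and a plan to search with Macaulay2 does not establish it; the paper closes this by writing down explicit matrices and checking surjectivity, injectivity, $\alpha\circ\beta=0$, and the absence of non-negative-degree syzygies. Until such an example is produced, your proof of the theorem is incomplete at (P17), even though your identification of it as the irreducible hard case is exactly right.
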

\begin{proof}
First, we use Lemma \ref{lema48} to prove the existence of the positive minimal monads of type (P9) and (P12). 

Indeed, Theorem \ref{thm:p3+p4} guarantees the existence of a stable bundle $\EE$ with $c_1(\EE)=-1$, $c_2(\EE)=6$ and $H^0(\EE(2))\neq0$, given as cohomology of a positive minimal monad of type (P3). So let $s\in H^0(\EE(2))$ such that $X:=(s)_0$ is a curve, and let $C$ be a complete intersection curve of type $(2, 1)$ disjoint of $X$. Lemma \ref{lema48} then allows us to conclude that there is a stable rank 2 bundle $\EE'$ 
satisfying $c_1(\EE')=-1, c_2(\EE')=8$ that is given as the cohomology of a monad of type (P9).

In order to obtain a minimal monad of type (P12), we can just repeat the arguments above replacing (P3) by (P4).

The other cases listed in the statement of the present theorem are obtained via the straightforward method already used in Theorem \ref{thm:p3+p4}, again with the help of Macaulay2. 

An explicit monad of type (P8) is given by the morphisms  
$$\alpha=\left(
\begin{array}{cccccc}
x^2& y^2&z^2& 0& w^3& 0\\
w& z& w& x^2&0&y^2\\ 
\end{array} \right)
~\mbox{ and }~\beta=\left(
\begin{array}{cc}
y^2&-w^3\\
-x^2&0\\
0&w^3\\
z&y^2\\
0&x^2-z^2\\
-w&-x^2\\
\end{array}
\right),$$
and we can see that $\alpha$ is surjective,  $\beta$ is injective and $\alpha\circ\beta=0$. 
A monad of type (P10) is obtained by considering
$$\alpha=\left(
\begin{array}{cccccc}
z& x&y^2& z^3& w^4& 0\\
y& z& x^2& 0&y^4&w^4\\ 
\end{array} \right)
~\mbox{ and }~\beta=\left(
\begin{array}{cc}
xy^3+y^2z^2&y^3z+x^2z^2+w^4\\
-y^3z+w^4&0\\
0&-yz^2\\
-y^2&-x^2\\
-x&-z\\
-z&-y\\
\end{array}
\right).$$

Next, a monad of type (P14) is given by the morphisms
$$\alpha=\left(
\begin{array}{cccccc}
x& y^3&x^3& 0& w^4& z^6\\
0& x& y& z^2&0&w^4\\ 
\end{array} \right)
~\mbox{ and }~\beta=\left(
\begin{array}{cc}
-x^2z^2-w^4&-z^6\\
0&-w^4\\
z^2&0\\
-y&0\\
x&y^3\\
0&x\\
\end{array}
\right).$$

Finally, a monad of type (P17) is obtained by considering the morphisms
$$\alpha=\left(
\begin{array}{cccccc}
x& y^2&x^2& 0& w^5& z^6\\
0& x& y& z^4&0&w^5\\ 
\end{array} \right)
~\mbox{ and }~\beta=\left(
\begin{array}{cc}
xz^4-w^5&-z^6\\
0&-w^5\\
-z^4&0\\
y&0\\
x&y^2\\
0&x\\
\end{array}
\right).$$
\end{proof}

Hartshorne and Rao posed the following question in \cite[(Q2), p. 806]{HR91}.
\begin{center}
	\textit{Are the $3$ conditions \ref{itema1}--\ref{itema4} sufficient for a stable bundle to exist with the spectrum $\X$?}
\end{center}
They gave a positive answer to this question when $c_1=0$ and $c_2\leq19$, leaving it open for large values of $c_2$.

We will now settle the question, arguing that it has a negative answer in general. More precisely:

\begin{prop}\label{prop4}
The multiset $\mathcal{X}_6^8=\{r_0r_1^3\}$ satisfies the properties \ref{itema1}--\ref{itema4}, but there is no stable rank $2$ bundle 
on $\mathbb{P}^3$ with $c_1=-1$ having $\X_6^8$ as spectrum. 
\end{prop}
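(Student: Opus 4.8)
The plan is to handle the two assertions of Proposition \ref{prop4} separately: a short combinatorial check that $\mathcal{X}_6^8$ obeys \ref{itema1}--\ref{itema4}, followed by a case analysis over minimal Horrocks monads for the nonexistence statement.

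For the combinatorial part, I would first write the multiset explicitly. Since $r_0=\{-1,0\}$ and $r_1=\{-2,1\}$, we have $\mathcal{X}_6^8=\{-2,-2,-2,-1,0,1,1,1\}$. Property \ref{itema1} holds because the involution $k\mapsto-k-1$ interchanges the blocks $\{-2,-2,-2\}\leftrightarrow\{1,1,1\}$ and $\{-1\}\leftrightarrow\{0\}$, fixing the multiset. Property \ref{itema2} holds because the integers present run consecutively from $-2$ to $1$. For \ref{itema4}, I would compute $k=\max\{-k_i\}=2$; the only integer $u$ with $-k\le u\le-2$ is $u=-2$, and since $-2$ occurs three times in $\mathcal{X}_6^8$ rather than once, the hypothesis of \ref{itema4} is never satisfied and the condition holds vacuously.

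For the nonexistence part, I would argue by contradiction: suppose $\mathcal{E}$ is a stable rank $2$ bundle with $c_1(\mathcal{E})=-1$, $c_2(\mathcal{E})=8$, and spectrum $\mathcal{X}_6^8$. Then $\mathcal{E}$ is the cohomology of a minimal Horrocks monad of the form \eqref{eq6}, which is positive, non-negative, or negative in the sense of Definition \ref{+monads}. Applying Theorem \ref{teo1} (with $k=1$ now the largest integer occurring in the spectrum) gives $\rho(-2)=3$ and $\rho(-1)=0$; combined with the self-duality of the middle term from Lemma \ref{lema3} and the summand-count inequality of Proposition \ref{propa2}, this shows that the complete list of candidate monads for $\mathcal{X}_6^8$ consists of the positive monad (P18) of Table \ref{table:c2=8}, the non-negative monads (N12) and (N13) of Table \ref{table:5}, and the negative monads obtained from these by adjoining summands of negative degree to $\mathcal{A}$. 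It then remains to eliminate each candidate using the results already established: the negative monads are excluded by Theorem \ref{negative}; the positive monad (P18) admits no stable cohomology by Lemma \ref{lema1}; the non-negative monad (N12) cannot exist, since minimality forces a zero column in the matrix of $\alpha$, contradicting surjectivity, as in Remark \ref{obs1}; and the non-negative monad (N13) admits no stable cohomology by Proposition \ref{propa3}. Since every case is impossible, no such $\mathcal{E}$ exists.

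The main obstacle is not any single computation but the completeness of the monad enumeration: the delicate point is to confirm that Theorem \ref{teo1}, together with the self-duality constraint of Lemma \ref{lema3} and the summand-count inequality of Proposition \ref{propa2}, really leaves no candidate monad unaccounted for, so that the four families above genuinely exhaust the possibilities compatible with $\mathcal{X}_6^8$. Once this completeness is secured, the remainder is bookkeeping, since Theorem \ref{negative}, Lemma \ref{lema1}, Remark \ref{obs1}, and Proposition \ref{propa3} each already carry the substantive content needed to close the respective cases.
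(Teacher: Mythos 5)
Your proposal is correct and takes essentially the same route as the paper: the paper's proof likewise reduces, via the table enumeration, to the three candidate monads (P18), (N12), (N13) for the spectrum $\mathcal{X}_6^8$, and eliminates them by citing Lemma \ref{lema1}, Remark \ref{obs1}, and Proposition \ref{propa3}, respectively. Your write-up is in fact a bit more explicit than the paper's, since you verify properties \ref{itema1}--\ref{itema4} directly and invoke Theorem \ref{negative} to exclude negative monads — steps the paper leaves implicit in Table \ref{table:4} and in its appeal to Tables \ref{table:c2=8} and \ref{table:5} alone.
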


\begin{proof}
If there is a stable rank 2 bundle $\EE$ whose spectrum is $\mathcal{X}_6^8$, then according to Tables \ref{table:c2=8} and \ref{table:5}, $\EE$ is cohomology of one of the following minimal monads: (P18), (N12) or (N13). However, the non-existence of these monads was proven in the Lemma \ref{lema1}, Remark \ref{obs1} and Proposition \ref{propa3}, respectively. 
\end{proof}


\section{Moduli Space of stable rank $2$ vector bundle on $\mathbb{P}^3$} \label{se:moduli}

The goals of this section are to compute the dimension of the families of stable rank $2$ vector bundles with odd determinant given as cohomology of homotopy free minimal monads, and then use this information to characterize new components of the moduli spaces $\mathcal{B}(-1,6)$ and $\mathcal{B}(-1,8)$, different from the ones already described in the literature.


Define $\mathcal{P}(\boldsymbol{a};\boldsymbol{b})$ to be the family of minimal monads of as in display \eqref{eq6}, where $\boldsymbol{a}=(a_1\dots a_s):=(a_1^{r_1},\cdots, a_k^{r_k})$ and $\boldsymbol{b}=(b_1,\dots,b_{s+1}):=(b_1^{t_1},\cdots, b_w^{t_w})$; assume that $b_{s+1}<a_1$, so that every monad in $\mathcal{P}(\boldsymbol{a};\boldsymbol{b})$ is homotopy free.

Let $\mathcal{V}(\boldsymbol{a};\boldsymbol{b})$ be the set of isomorphism classes of stable rank $2$ bundles on $\mathbb{P}^3$ with odd determinant which are given as cohomology of a monad in $\mathcal{P}(\boldsymbol{a};\boldsymbol{b})$. We want to show that $\mathcal{P}(\boldsymbol{a};\boldsymbol{b})$ has the structure of a quasi-projective variety, and that $\mathcal{V}(\boldsymbol{a};\boldsymbol{b})$ can be regarded as a quotient of $\mathcal{P}(\boldsymbol{a};\boldsymbol{b})$ by the action of a suitable algebraic group.

Set, as above, $\mathcal{B}=\bigoplus_{j=1}^{s+1}\Big(\op3(b_j)\oplus\op3(-b_j-1)\Big)$ and $\mathcal{A}=\bigoplus_{i=1}^{s}\op3(-a_i-1)$. Let
$$ H := \{ \beta:\mathcal{A}^\vee(-1)\longrightarrow\mathcal{B} ~|~ \beta \mbox{ is locally left invertible }\}, $$
and note that, from the diagram in display \eqref{diag-q} and the equation in display \eqref{eq-beta},
$$ \mathcal{P}(\boldsymbol{a};\boldsymbol{b}) =\{ \beta \in H ~|~ \beta^*(-1)\circ\Omega\circ\beta=0 \}. $$
This means that $\mathcal{P}(\boldsymbol{a};\boldsymbol{b})$ is represented as a locally closed subset of $\Hom(\mathcal{A}^\vee(-1),\mathcal{B})$, which is an affine space.
Since $\beta^*(-1)\circ \Omega\circ\beta$ is skew symmetric as a morphism from $\mathcal{A}^\vee(-1)$ to $\mathcal{A}$, we conclude that 
$$ \dim\mathcal{P}(\boldsymbol{a};\boldsymbol{b}) = \dim H - \dim W $$
where $W$ denotes the subspace of skew symmetric bilinear forms on $\mathcal{A}(-1)$.

Next, we consider the following two groups:
$$ G := \{\varphi\in\End(\mathcal{B}) ~|~ \varphi^*(-1)\circ\Omega\circ\varphi=\Omega\},~ \mbox{ and } $$
$$ \gl(\mathcal{A}) := \{ u:\mathcal{A}\longrightarrow\mathcal{A} ~|~ u \mbox{ is an isomorphism }\}. $$
They act on $\mathcal{P}(\boldsymbol{a};\boldsymbol{b})$ as follows
\begin{equation}\label{act}
(u,\varphi)\cdot\beta = \varphi^{-1}\circ\beta\circ u^*(-1) ~, \mbox{ for } u\in\gl(\mathcal{A}), \mbox{ and } \varphi\in G.
\end{equation}
Clearly, the subgroup $\pm(\id,\id)\subset\big( \gl(\mathcal{A})\times G \big)$ acts trivially on $\mathcal{P}(\boldsymbol{a};\boldsymbol{b})$, so we set
$$ G_0 := \big( \gl(\mathcal{A})\times G \big) \Big/ \pm(\id,\id). $$
Note that $G_0$ acts freely on $\mathcal{P}(\boldsymbol{a};\boldsymbol{b})$. Indeed, if $\varphi^{-1}\circ\beta\circ u^*(-1)=\beta$, then the pair $(u,\varphi)$ induces the following automorphism of the corresponding monad
\begin{equation} \label{isom-monad} \begin{aligned}
\xymatrix{
\mathcal{A}^\vee(-1) \ar[r]^{\beta} \ar[d]_{u^*(-1)^{-1}} & \mathcal{B} \ar[r]^{\alpha} \ar[d]^{\varphi^{-1}} & \mathcal{A} \ar[d]^{u} \\
\mathcal{A}^\vee(-1) \ar[r]^{\beta} & \mathcal{B} \ar[r]^{\alpha} & \mathcal{A}
}  \end{aligned}    
\end{equation}
where $\alpha=\beta^*(-1)\circ\Omega$. Since the monad is homotopy free and its cohomology bundle is stable, every automorphism must be a multiple of the identity, thus $(u,\varphi)=\lambda\cdot(\id,\id)$; however, $\varphi$ is orthogonal, thus $\lambda^2=1$, as desired.

It is not difficult to see that two monads in $\mathcal{P}(\boldsymbol{a};\boldsymbol{b})$ are isomorphic if and only if they belong to the same orbit of this action. Consequently, the cohomology bundles form monads in the same orbit are isomorphic, providing a map
$$ \mathcal{P}(\boldsymbol{a};\boldsymbol{b}) \big/ 
G_0 \longrightarrow \mathcal{V}(\boldsymbol{a};\boldsymbol{b}). $$
We now argue that this is an bijective. Indeed, surjectivity comes from the very definition of the set $\mathcal{V}(\boldsymbol{a};\boldsymbol{b})$, while injectivity follows from the fact that monads in different orbits have non-isomorphic cohomology bundles. It follows that $\mathcal{V}(\boldsymbol{a};\boldsymbol{b})$ can be regarded as a quasi-projective variety parametrizing a family of (isomorphism classes of) stable rank 2 bundles. Therefore, we obtain an injective modular morphism 
$$ \Psi ~:~ \mathcal{V}(\boldsymbol{a};\boldsymbol{b}) \to \mathcal{B}(-1,c_2), $$
where $c_2$ is given in terms of $(\boldsymbol{a};\boldsymbol{b})$ according to the formula in display \eqref{eq1}.

The dimension of $\operatorname{im}(\Psi)$ coincides with $\dim\mathcal{V}(\boldsymbol{a};\boldsymbol{b})$, and this is computed by the following formula
\begin{equation} \label{dim formula}
\dim \mathcal{V}(\boldsymbol{a};\boldsymbol{b}) = \dim H - \dim W - \dim\gl(\mathcal{A}) - \dim G ;
\end{equation}
the first two terms give $\dim\mathcal{P}(\boldsymbol{a};\boldsymbol{b})$, and then we discount the dimensions of the groups acting on $\mathcal{P}(\boldsymbol{a};\boldsymbol{b})$. 

To compute the dimension of $G$, we observe that its Lie algebra $\mathfrak{G}$ is the set of invertible elements $\sigma\in\End(\mathcal{B})$ such that
$\sigma^*(-1)q-q\sigma=0$ which implies $q\sigma$ symmetric.
Thus we can write $\mathfrak{G}=q^{-1}S,$ where 
$S\subset\Hom(\mathcal{B}, \mathcal{B}^\vee(-1))$ is the subspace of symmetric bilinear forms on $\mathcal{B}$. Therefore  
$$\dim G=\dim\mathfrak{G}=\dim S.$$

With the help of the dimension formula in display \eqref{dim formula}, we list in Table \ref{table:1} the dimensions of the families $\mathcal{V}(\boldsymbol{a};\boldsymbol{b})$ of stable rank 2 bundles with odd determinant and $c_2\leq8$ on $\mathbb{P}^3$ that are given as cohomology of a minimal, free homotopy monad.

\begin{table}[h!]
\begin{center}
\begin{tabular}{|c|c|c|c|c|c|c|c|} 
\hline
Spectrum & $\boldsymbol{b}$& $\boldsymbol{a}$&$w$& $g$& $s$&$h$&$\dim \mathcal{V}(\boldsymbol{a};\boldsymbol{b})$\\ 
\hline \hline
$\X^6_1=\{r_0^3\}$ & $0^4$&$1^3$ &$60$&$9$&$56$&$168$& $\textbf{43}$\\
\hline
$\X^6_2=\{r_0^2r_1\}$ &$0^2$&$2$&$0$&$1$&$16$&$60$&$\textbf{43}$\\
\hline
$\X^6_3=\{r_0r_1^2\}$ &$1^3$&$2^2$&$56$&$4$&$129$&$234$& $45$\\
\hline
$\X^6_4=\{r_0r_1r_2\}$ & $0, 2$&$3$ &$0$&$1$&$92$&$143$&$50$\\
\hline\hline
$\X^8_1=\{r_0^4\}$ & $0^5$&$1^4$&$120$&$16$&$85$&$280$&$\textbf{59}$\\
\hline
$\X^8_2=\{r_0^3r_1\}$ &$0^3$&$1, 2$&$35$&$6$&$33$&$132$&$58$\\
\hline
$\X^8_3=\{r_0^2r_1^2\}$ &$0, 1^2$&$2^2$&$56$&$4$&$97$&$216$&$\textbf{59}$\\
\hline
$\X^8_5=\{r_0r_1^2r_2\}$ & $1^2$&$3$  &$0$& $1$& $64$& $132$& $67$\\
\hline
$\X^8_7=\{r_0r_1r_2r_3\}$ &$0, 3$& $4$ &$0$&$1$&$181$&$260$&$78$\\
			\hline
		\end{tabular}
		\medskip
		\caption{Computation of the dimensions of the families of stable bundles $\mathcal{V}(\boldsymbol{a};\boldsymbol{b})$; we set $h:=\dim H$, $w:=\dim W$, $g:=\dim \gl(\mathcal{A})$, and $s:=\dim G=\dim S$. The families with dimension equal to the expected one are marked in bold.}
		\label{table:1}
	\end{center}
\end{table}


With these results in mind, we are finally ready to state the main results of this section.

\begin{Theorem}\label{m(-1,6)}
The moduli scheme $\mathcal{B}(-1,6)$ has at least four irreducible components:
\begin{enumerate}
\item the Hartshorne component $M_1$, containing the family $\mathcal{V}(1^3;0^4)$ as an open set, with dimension equal to $43$;
\item two Ein components $M_2$ and $M_3$, whose general point corresponds to an element of the families $\mathcal{V}(2;0^2)$ and $\mathcal{V}(3;0, 2)$, with dimensions equal to $43$ and $50$, respectively.
\item a new component $M_4$, of dimension greater than or equal to $45$ containing the family $\mathcal{V}(2^2;1^3)$.
\end{enumerate}
\end{Theorem}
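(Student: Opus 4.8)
The plan is to place each of the four families in a distinct irreducible component of $\mathcal{B}(-1,6)$, by combining the dimension count of Table \ref{table:1}, a Zariski tangent space computation, and the upper semicontinuity of the spectrum. First I would record the data furnished by Section \ref{se:moduli}: since the modular morphism $\Psi$ is injective, each of the irreducible families $\mathcal{V}(1^3;0^4)$, $\mathcal{V}(2;0^2)$, $\mathcal{V}(2^2;1^3)$ and $\mathcal{V}(3;0,2)$ is realized as an irreducible locally closed subvariety of $\mathcal{B}(-1,6)$, of respective dimensions $43,43,45,50$, carrying the pairwise distinct spectra $\X_1^6,\X_2^6,\X_3^6,\X_4^6$. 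I would also record the numerical fact that, among the only four admissible spectra for $c_2=6$ (Table \ref{table:4}), the invariant $h^1(\E(-2))$ takes the four distinct values $0,1,2,3$ on $\X_1^6,\X_2^6,\X_3^6,\X_4^6$, with $3$ the maximal value attainable; this lets me detect the spectrum through one semicontinuous cohomological number.

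For the three families $\mathcal{V}(1^3;0^4)$, $\mathcal{V}(2;0^2)$ and $\mathcal{V}(3;0,2)$ I would prove that the closure of each is a whole irreducible component. The point is to compare the family dimension with the tangent space to $\mathcal{B}(-1,6)$. Fixing a general bundle $\E$ in such a family, presented by its homotopy free minimal monad \eqref{sd-monad} and its self-dual structure \eqref{eq-beta}, I would compute $\dim\operatorname{Ext}^1(\E,\E)$ directly from the monad, reducing everything to the cohomology of sums of line bundles. Granting $\dim\operatorname{Ext}^1(\E,\E)=\dim\mathcal{V}$, the chain
\[ \dim\mathcal{V}\le\dim_{[\E]}\mathcal{B}(-1,6)\le\dim\operatorname{Ext}^1(\E,\E)=\dim\mathcal{V} \]
forces equality throughout, so $\overline{\mathcal{V}}$ is an irreducible component having $\mathcal{V}$ as a dense open subset. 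For $\mathcal{V}(1^3;0^4)$ and $\mathcal{V}(2;0^2)$, whose dimension $43$ equals the expected dimension $8c_2-5$, this reduces to the vanishing $\operatorname{Ext}^2(\E,\E)=0$ (generic smoothness, via the Serre duality isomorphism $\operatorname{Ext}^2(\E,\E)\simeq\operatorname{Ext}^1(\E,\E(-4))^\vee$); for $\mathcal{V}(3;0,2)$, whose dimension $50$ exceeds the expected one, the obstruction space is nonzero and the full equality $\dim\operatorname{Ext}^1(\E,\E)=50$ is needed. This produces components $M_1,M_2,M_3$ of dimensions $43,43,50$.

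I would then separate the components and extract the fourth. Here $M_3$ has dimension $50\neq 43$, so it differs from $M_1$ and $M_2$; and $M_1\neq M_2$ because their dense families carry the distinct spectra $\X_1^6\neq\X_2^6$ (two dense open subsets of one component would meet, which is impossible for bundles of different spectra). For the new component I argue that $\mathcal{V}(2^2;1^3)$ lies in none of $M_1,M_2,M_3$. It is not contained in $M_1$ or $M_2$ for dimension reasons, as $\dim\mathcal{V}(2^2;1^3)=45>43$. It is not contained in $M_3$ by semicontinuity: $h^1(\E(-2))$ is upper semicontinuous in flat families, its generic value on $M_3$ is $3$ (spectrum $\X_4^6$), and since $3$ is already maximal for $c_2=6$, \emph{every} bundle parametrized by $M_3$ has $h^1(\E(-2))=3$, hence spectrum $\X_4^6$; whereas every bundle in $\mathcal{V}(2^2;1^3)$ has $h^1(\E(-2))=2$, spectrum $\X_3^6$. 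Thus $\mathcal{V}(2^2;1^3)$ meets none of $M_1,M_2,M_3$, so its irreducible closure lies in a fourth component $M_4$, necessarily of dimension $\ge 45$.

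The main obstacle will be the explicit tangent space computation $\dim\operatorname{Ext}^1(\E,\E)=\dim\mathcal{V}$ for a general member of each family, and in particular the vanishing $\operatorname{Ext}^2(\E,\E)=0$ for the two expected-dimensional families $M_1,M_2$; this is exactly where the homotopy freeness of \eqref{sd-monad} and the relation \eqref{eq-beta} are decisive. A secondary subtlety, already handled above, is that the spectrum is not itself a deformation invariant; the escape is that for $c_2=6$ it is governed by the single upper semicontinuous integer $h^1(\E(-2))$, which is precisely what makes the exclusion of $\mathcal{V}(2^2;1^3)$ from the larger component $M_3$ rigorous and also explains why one obtains only the bound $\dim M_4\ge 45$ rather than an equality.
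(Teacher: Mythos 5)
Your treatment of the heart of the theorem --- the existence of the fourth component --- is essentially the paper's own proof: exclude $\mathcal{V}(2^2;1^3)$ from $M_1$ and $M_2$ by the dimension count $45>43$ from Table \ref{table:1}, and exclude it from $M_3$ by upper semicontinuity of a cohomological invariant that the spectrum determines. The only cosmetic difference is the twist: the paper uses $h^1(\E(-3))$ (generic value $1$ on $M_3$, value $0$ on the family), you use $h^1(\E(-2))$ (generic value $3$ on $M_3$, value $2$ on the family); both work, and your extra maximality digression is unnecessary, since semicontinuity already forces $h^1(\mathcal{F}(-2))\ge 3$ for every $\mathcal{F}\in M_3$, which contradicts the value $2$ directly. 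Your separation of $M_1$ from $M_2$ via disjoint locally closed families of full dimension carrying distinct spectra is also sound (the paper does not even spell this out).

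The genuine divergence is in parts (1) and (2). The paper does not prove that $M_1$, $M_2$, $M_3$ are irreducible components of the stated dimensions; it cites Hartshorne \cite[Section 4]{H78} and Ein \cite{Ein88} for these facts. You propose instead to establish them from scratch by deformation theory: the chain $\dim\mathcal{V}\le\dim_{[\E]}\mathcal{B}(-1,6)\le\dim\operatorname{Ext}^1(\E,\E)=\dim\mathcal{V}$, reducing to $\operatorname{Ext}^2(\E,\E)=0$ for the two expected-dimension families and to a direct monadic computation of $\operatorname{Ext}^1(\E,\E)=50$ for $\mathcal{V}(3;0,2)$. The mechanism is correct and is, in substance, how the cited results are proved; but you never carry out these computations --- you explicitly ``grant'' them and flag them as the main obstacle. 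As written, this is the one real gap: without either the $\operatorname{Ext}$ computations or the citations, parts (1) and (2) are unproven, and the dimension-based exclusion in part (3) also hangs on them, since it needs $\dim M_1=\dim M_2=43$ exactly (not merely $\ge 43$). The gap is closed at no cost by doing what the paper does, namely invoking \cite{H78} and \cite{Ein88}; with that substitution your proposal matches the paper's proof in all essentials.
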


\begin{proof}
It was shown by Hartshorne \cite[Section 4]{H78} that $M_1$ is an irreducible component of $\mathcal{B}(-1,6)$ of dimension $45$ while Ein in \cite{Ein88} proved that $M_2, M_3$ are irreducible components of $\mathcal{B}(-1,6)$. The dimensions of these components are also computed in the given references.

Since the family $\mathcal{V}(2^2;1^3)$ has dimension equal to $45$, it cannot contained 
in the components $M_1$ and $M_2$; we will show that this family is not contained in $M_3$.

Indeed, assume that $\mathcal{V}(2^2;1^3)\subset M_3$. From definition of spectrum, we get $h^1(\mathcal{E}(-3))=1$ for $\mathcal{E}\in M_3$; then the inferior semi-continuity of the dimension of the cohomology groups of coherent sheaves, we get that
\begin{equation}\label{eq7}
h^1(\mathcal{F}(-3))\geq1,
\end{equation}
for every $\mathcal{F}\in M_3$. However, every 
$\mathcal{E}\in \mathcal{V}(2^2;1^3)$ satisfies $h^1(\mathcal{E}(-3))=0$, thus it cannot be a limit of bundles in $M_3$. It follows that $\mathcal{V}(2^2;1^3)$ must be 
contained in a new irreducible component of $\mathcal{B}(-1,6)$, whose dimension is at least $45$.
\end{proof}

\begin{Remark}
The existence of other irreducible components in $\mathcal{B}(-1,6)$, beyond the ones presented in Theorem \ref{m(-1,6)} requires a careful study of the monads of type (P3), (N1) and (N2), which are not homotopy free. At the moment, we do not know how to dimensions of the corresponding families of bundles.
\end{Remark}

Regarding the moduli scheme $\mathcal{B}(-1,8)$, only three components are known: 
\begin{enumerate}
\item the Hartshorne component $M_1$, with dimension equal to $59$, whose general point corresponds to the cohomology of a monad in $\mathcal{P}(1^4;0^5)$;
\item two Ein components $M_2$ and $M_3$, whose general point corresponds to the cohomology of a monad in $\mathcal{P}(3;1^2)$ and $\mathcal{P}(4;0, 3)$, with dimensions equal to $67$ and $78$, respectively.
\end{enumerate}
Our next result establishes the existence of a fourth irreducible component.

\begin{Theorem}\label{m(-1,8)}
In addition to the three components $M_1$, $M_2$, and $M_3$ mentioned above, the moduli scheme $\mathcal{B}(-1,8)$ possesses at least one more irreducible component of dimension larger than or equal to $59$ which contains the family $\mathcal{V}(2^2; 0, 1^2)$.
\end{Theorem}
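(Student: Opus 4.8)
The plan is to follow the strategy used to prove Theorem \ref{m(-1,6)}: exhibit the family $\mathcal{V}(2^2;0,1^2)$ as a $59$-dimensional family of stable bundles and then show that it is not contained in any of the three known components $M_1$, $M_2$, $M_3$, so that its closure must lie in a fourth irreducible component of dimension at least $59$. First I would record that $\mathcal{V}(2^2;0,1^2)$ is non-empty and genuinely of dimension $59$: non-emptiness is exactly the existence of homotopy free monads of type (P10) established in Theorem \ref{monads c2=8}, while the dimension is read off from Table \ref{table:1}. The construction of Section \ref{se:moduli} then supplies an injective modular morphism $\Psi\colon\mathcal{V}(2^2;0,1^2)\to\mathcal{B}(-1,8)$, whose image I want to locate among the irreducible components.

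Next I would separate $\mathcal{V}(2^2;0,1^2)$ from the two Ein components $M_2$ and $M_3$ by a cohomological invariant combined with semi-continuity, exactly as was done against $M_3$ in Theorem \ref{m(-1,6)}. Every bundle in $\mathcal{V}(2^2;0,1^2)$ has spectrum $\mathcal{X}_3^8=\{r_0^2r_1^2\}$, and item \ref{itemb1} of the definition of the spectrum gives
\begin{equation*}
h^1(\mathcal{E}(-3))=\textstyle\sum_i\MAX\{0,k_i-1\}=0.
\end{equation*}
The general members of $M_2$ and $M_3$ have spectra $\mathcal{X}_5^8=\{r_0r_1^2r_2\}$ and $\mathcal{X}_7^8=\{r_0r_1r_2r_3\}$, for which the same formula returns $h^1(\mathcal{E}(-3))=1$ and $h^1(\mathcal{E}(-3))=3$, respectively. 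Since these values are attained on a dense open subset of each component, they are the minimal values of $h^1(\,\cdot\,(-3))$ there; by upper semi-continuity of cohomology I would conclude that every bundle parametrized by $M_2$ (resp. $M_3$) satisfies $h^1(\mathcal{E}(-3))\ge1$ (resp. $\ge3$), and hence that no member of $\mathcal{V}(2^2;0,1^2)$, all having $h^1(\mathcal{E}(-3))=0$, can belong to $M_2\cup M_3$.

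The delicate step, and the one I expect to be the main obstacle, is excluding the Hartshorne component $M_1$. Here $\dim M_1=59=\dim\mathcal{V}(2^2;0,1^2)$, and the general member of $M_1$ has spectrum $\mathcal{X}_1^8=\{r_0^4\}$, for which $h^1(\mathcal{E}(-3))=0$ as well; thus the invariant that disposes of $M_2$ and $M_3$ no longer separates the families and the semi-continuity argument breaks down. Instead I would argue by dimension together with the spectrum: were $\operatorname{im}\Psi$ contained in $M_1$, then by irreducibility and equality of dimensions its closure would equal $M_1$, so $\mathcal{V}(2^2;0,1^2)$ would be dense in $M_1$ and would therefore meet the open dense locus of general members, which carry spectrum $\mathcal{X}_1^8$. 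This contradicts the fact that every bundle of $\mathcal{V}(2^2;0,1^2)$ has spectrum $\mathcal{X}_3^8\ne\mathcal{X}_1^8$.

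Putting the three exclusions together, the image of $\Psi$ is contained in an irreducible component of $\mathcal{B}(-1,8)$ different from $M_1$, $M_2$ and $M_3$, and this component has dimension at least $\dim\mathcal{V}(2^2;0,1^2)=59$, as claimed. The only genuinely non-formal ingredient is the comparison of the spectra of the generic members of $M_1,M_2,M_3$ with the constant spectrum $\mathcal{X}_3^8$ on $\mathcal{V}(2^2;0,1^2)$; once these are in hand, the semi-continuity estimate handles $M_2$ and $M_3$ automatically, while the dimension count (which is forced by the equality $\dim M_1=\dim\mathcal{V}(2^2;0,1^2)$) handles $M_1$.
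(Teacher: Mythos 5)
Your proposal is correct and follows essentially the same route as the paper: semi-continuity of $h^1$ of a negative twist (computed from the spectra) rules out the Ein components $M_2$ and $M_3$, while the equality $\dim\mathcal{V}(2^2;0,1^2)=\dim M_1=59$ combined with the fact that the generic member of $M_1$ is a Hartshorne bundle of spectrum $\{r_0^4\}$ rules out $M_1$. A minor point in your favor: your values $h^1(\mathcal{E}(-3))=1$ for the generic point of $M_2$ and $h^1(\mathcal{E}(-3))=3$ for that of $M_3$ are the correct ones, whereas the paper's stated twists for the two Ein components appear to be interchanged; this does not affect the argument in either version.
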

\begin{proof}
A generic point $\mathcal{F}_2\in M_2$ satisfies $h^1(\mathcal{F}_2(-4))=1$, while 
in a generic point $\mathcal{F}_3\in M_3$ satisfies $h^1(\mathcal{F}_3(-3))=1$. Since every $\EE\in \mathcal{V}(2^2; 0, 1^2)$ has $h^1(\EE(k))=0$ for $k\le-3$, it follows from semi-continuity that $\mathcal{V}(2^2; 0, 1^2)$ cannot be contained neither in $M_2$ nor in $M_3$. 

Since $\dim \mathcal{V}(2^2; 0, 1^2)=\dim M_1=59$ and the generic point in $M_1$ corresponds to a Hartshorne bundle, we also conclude that the family $\mathcal{V}(2^2; 0, 1^2)$ cannot be contained in $M_1$. Therefore, $\mathcal{V}(2^2; 0, 1^2)$ must lie in a new irreducible component of $\mathcal{B}(-1,8)$.
\end{proof}

\begin{Remark}\label{p8}
The family $\mathcal{V}(1, 2; 0^3)$ has dimension equal to $58$, and therefore it does not define another irreducible component of $\mathcal{B}(-1,8)$. Since $h^1(\EE(-2))=1$ for every $\EE\in \mathcal{V}(1, 2; 0^3)$, this family must be contained in an irreducible component whose generic point corresponds to a bundle $\mathcal{F}$ satisfying $h^1(\mathcal{F}(-2))\le1$. In the table below, we summarize of possible values of $h^1(\mathcal{F}(-2))$ when $\mathcal{F}$ is a stable rank 2 bundle with $c_1(\mathcal{F})=-1$ and $c_1(\mathcal{F})=8$, according to it possible spectra:
\begin{table}[h!]
\centering
\begin{tabular}{|c|c|c|c|c|c|c|}
\hline
 $\mathcal{X}(\mathcal{F})$&$\{r_0^4\}$& $\{r_0^3r_1\}$& $\{r_0^2r_1^2\}$& $\{r_0^2r_1r_2\}$& $\{r_0r_1^2r_2\}$&$\{r_0r_1r_2r_3\}$\\
 \hline
 $h^1(\mathcal{F}(-2))$& $0$& $1$& $2$& $3$& $4$& $6$\\
 \hline
   \end{tabular}
  \caption{Dimension of $H^1(\mathcal{F}(-2))$ for $\mathcal{F}\in\mathcal{B}(-1,8)$.}
 \label{table:6}
\end{table}

Therefore, there are two possibilities: either all the sheaves $\EE$ with spectrum equal to $\mathcal{X}^8_2=\{r_0^3r_1\}$, including those in the family $\mathcal{V}(1, 2; 0^3)$, define a new irreducible component of $\mathcal{B}(-1,8)$, or all such sheaves lie in the Hartshorne component. To settle this problem, one must analyze the monads of type (P9), (N4) and (N5), whose cohomology bundles also have spectrum equal to $\mathcal{X}^8_2=\{r_0^3r_1\}$.
\end{Remark}

\begin{Remark}\label{p13}
As shown in the Table \ref{table:c2=8}, the moduli scheme $\mathcal{B}(-1,8)$ admits an irreducible component (namely, the Ein component $M_2$) which contains two families
of bundles with different spectra; these bundles are cohomology of the monads of type (P13) and (P16). In this case, a general point of $M_2$ corresponds to a bundle given as the cohomology of a monad of type (P16). 
\end{Remark}


\bibliographystyle{amsalpha}

\end{document}